\newtheorem{thm}{Theorem}[section]
\newtheorem{lem}[thm]{Lemma}
\newtheorem{defn}[thm]{Definition}
     \newcommand{\norm}[1]{\left\|#1\right\|}
     \newcommand{\abs}[1]{\left|#1\right|}
     \newcommand{\floor}[1]{\left \lfloor #1 \right \rfloor}
     \newcommand{\ceil}[1]{\left \lceil #1 \right \rceil}
     \newcommand{\prob}[2]{\mathbb{P}_{#2} \left(#1\right)}
     \newcommand{\gprob}[1]{ {\bf P} \left[ #1 \right]}
     \newcommand{\probGone}[2]{\mathbb{P}_{#2}^{G_1} \left(#1\right)}
     \newcommand{\pr}[2]{{\bf Pr}_{#2}\left(#1\right)}
     \newcommand{\E}[0]{\mathbb{E}}
     \newcommand{\Var}[0]{\text{Var}}
     \newcommand{\Z}[0]{\mathbb{Z}}
     \newcommand{\N}[0]{\mathbb{N}}
     \newcommand{\X}[0]{{\bf X}}     
     \newcommand{\vect}[1]{\boldsymbol{#1}}
     \newcommand{\eps}[0]{\epsilon}
     \newcommand{\del}[0]{\delta}
     \newcommand{\gam}[0]{\gamma}
     \newcommand{\kmix}[0]{{k_{\text{mix}}} }
     \newcommand{\tmix}[0]{{t_{\text{mix}}} }
     \newcommand{\path}[0]{\mathcal{P}}
     \newcommand{\dist}[0]{\text{dist}}
     \newcommand{\tbridge}[0]{T^{(u_b,v_b)}}
     \def\gae{\lower 3pt \hbox{$\ \buildrel {\displaystyle >}\over \sim \ $}} 
     \def\lae{\lower 3pt \hbox{$\ \buildrel {\displaystyle <}\over \sim \ $}} 
\begin{document}
  
    \title{\bf Contact Process on a Graph with Communities}
    
\author{David Sivakoff}

    \maketitle
 
 \begin{abstract}
We are interested in the spread of an epidemic between two communities that have higher connectivity within than between them.  We model the two communities as independent Erd\"os-R\'enyi random graphs, each with $n$ vertices and edge probability $p = n^{a-1}$ ($0<a<1$), then add a small set of bridge edges, $B$, between the communities.  We model the epidemic on this network as a contact process (Susceptible-Infected-Susceptible infection) with infection rate $\lambda$ and recovery rate $1$.  If $np\lambda = b > 1$ then the contact process on the Erd\"os-R\'enyi random graph is supercritical, and we show that it survives for exponentially long.  Further, let $\tau$ be the time to infect a positive fraction of vertices in the second community when the infection starts from a single vertex in the first community.  We show that on the event that the contact process survives exponentially long, $\tau\abs{B}/(np)$ converges in distribution to an exponential random variable with a specified rate.  These results generalize to a graph with $N$ communities.
\end{abstract}

\section{Introduction}
Let $G_1 = (V_1, E_1)$ and $G_2 = (V_2,E_2)$ be independent instances of $\mathcal{G}(n,p)$, the Erd\"os-R\'enyi random graph ensemble with $n$ vertices and edge probability $p$.  Construct the graph $G = (V,E)$ such that $V=V_1\cup V_2$ and $E= E_1\cup E_2 \cup B$, where $B \subset V_1 \times V_2$ is a set of `bridge' edges chosen independently of $G_1$ and $G_2$.  When $B$ is a small set of edges relative to $E_i$ ($i=1,2$), then the graph will have two distinct communities with a higher concentration of edges within each community than between the two communities.  During the 2009-2010 Stochastic Analysis program at SAMSI, John McSweeney and Bruce Rogers simulated the contact process (defined carefully below) on this graph as a model for a Susceptible-Infected-Susceptible epidemic in a network with two communities.  Figure~\ref{simulations} depicts their results on a network with $1000$ total vertices ($n=500$ in each community), mean degree $np = 50$ and $\abs{B}=2$ bridge edges.  Each line represents an independent simulation in which initially there are $2$ infected vertices in $V_1$ and all other vertices are healthy; infected vertices become healthy at rate 1 and transmit the infection to their neighbors at rate $\lambda = 0.06$.  In each simulation the infection very quickly reaches a quasi-equilibrium state in the first component $V_1$, then makes a jump to the second component $V_2$ at a random time.  We were motivated by the picture to prove this rigorously, and to determine the limiting distribution of the random jump time.  We also prove that the contact process survives for exponentially long (in $n$) on the random graph before eventually hitting the absorbing state in which all vertices are healthy.

\begin{figure}[ht]
\centering
\includegraphics[width=.8\textwidth]{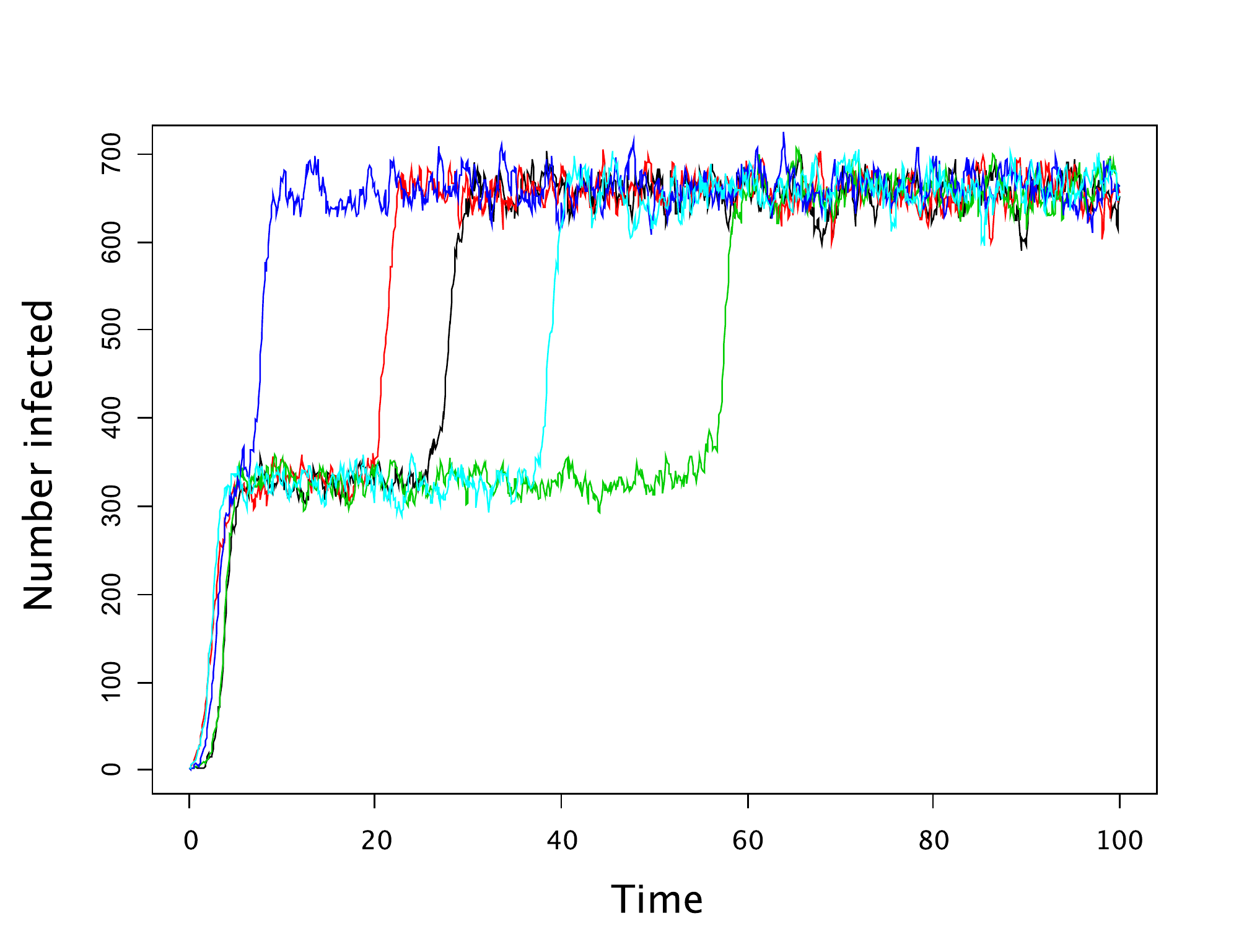}
\caption{Simulation results for the contact process on $G$ with $\abs{B} = 2$, $n = 500$, $p = 0.1$ and $\lambda = 0.06$ so that $b = 3$.  Each line represents an independent trial started from two initially infected vertices in $V_1$. Picture is due to John McSweeney and Bruce Rogers.}
\label{simulations}
\end{figure}

The contact process on a graph $G$ is a continuous time Markov process $\xi_t \subset V$, where $\xi_t$ denotes the set of infected vertices at time $t$.  If $\lambda = \lambda(n,p)>0$ is the infection rate, then an infected vertex sends the infection to each of its neighbors in $G$ according to independent Poisson processes with rate $\lambda$, and it becomes uninfected according to an independent Poisson process with rate $1$.  In effect, a healthy vertex $v\notin\xi_t$ becomes infected at rate $\lambda \abs{\mathcal{N}(v)\cap \xi_t}$, where $\mathcal{N}(v)$ denotes the set of neighbors of $v$ in $G$, while an infected vertex becomes healthy at rate $1$.  This is made rigorous by Harris' graphical construction, which is described in Section~\ref{section:duality}.

The contact process has been recently studied on two different models of power-law random graphs by Chatterjee and Durrett~\cite{CD:2009} and Berger, Borgs, Chayes and Saberi~\cite{BBCS:2005}.  The term power-law random graph refers to the degree distribution having tails that decay like $c k^{-\alpha}$.  For the random graphs considered in~\cite{CD:2009} and~\cite{BBCS:2005} it was shown that the contact process survives on these graphs for a long time for any $\lambda>0$.  This was in contrast to the mean-field calculations of~\cite{PSV1:2001} and~\cite{PSV2:2001}, which predicted that for $\alpha>3$ there is a $\lambda_c>0$ so the contact process will die out quickly for $\lambda < \lambda_c$.  Subsequently, Peterson showed that $\lambda_c>0$ for the contact process on the complete graph with random vertex weights following a power-law~\cite{Peterson:2010}, and in fact explicitly gives the value of $\lambda_c$ in terms of the second moment of the vertex weights.

\subsection{Main Results}
We study the high-degree regime where the mean degree of the random graph scales as $np = n^{a}$ with $a\in (0,1]$.  Since we want the graph to be connected, we need the average degree to be at least $np \geq c \log n$ for $c>1$.  However, part of the proof requires that the random walk on $\mathcal{G}(n,p)$ be very close to uniformly distributed on the vertices after a short amount of time (see Lemmas~\ref{mixing-lem} and~\ref{dual-lem}).  This part of the proof is simplified when $np = n^a$, though a similar method may work for smaller average degrees.  Additionally, in the early stages of the infection process we must have large neighborhoods so that transmission events between neighboring infected vertices are rare.  For smaller degrees, additional care will be needed to guarantee that the infection spreads out quickly.

Before we state our main results, we will need some notation.  Let $\gprob{\cdot}$ denote the law of $G = G_1\cup G_2 \cup B$, where $G_1, G_2 \sim \mathcal{G}(n,p)$ are independent random graphs, and the edges in $B$ are chosen independently of $G_1$ and $G_2$ such that each edge in $B$ has exactly one endpoint in $G_1$ and one in $G_2$. The dependence on $n$ and $p$ has been suppressed from our notation.  Once $G$ is chosen according $\gprob{\cdot}$, it is fixed for all time, so we let $\prob{\cdot}{A}$ be the law of the contact process conditional on $G$ with $\xi_0 = A \subset V$, and abbreviate $\prob{\cdot}{v} := \prob{\cdot}{\{v\}}$.  We will also often want to observe the contact process on only one of the two subcomponents.  That is, we will ignore the edges in $B$, so $\xi_t = \xi_t\cap V_1$ for all $t$ whenever $\xi_0 = A\subset V_1$, and we denote the law of this restricted process conditional on $G$ by $\probGone{\cdot}{A}$.  We will also use $a_n = O(b_n)$ to mean $a_n \leq C b_n$ for $C>0$, $a_n = \Omega(b_n)$ to mean $a_n>c b_n$ for $c>0$, and $a_n = o(b_n)$ to mean $a_n/b_n \to 0$.

%For simplicity we assume that there is just one bridge edge $B = \{(u_b, v_b)\}$ with $u_b \in V_1$ and $v_b \in V_2$.  
First we must guarantee that the contact process on the random graph can survive long enough to spread from the first community to the second.  If $np\lambda = b < 1$, then the contact process is dominated by a subcritical branching process, and dies out quickly, so we assume that $np\lambda = b > 1$. In this case we have the following theorem, which says that the supercritical contact process survives exponentially long on the random graph when it survives for at least $\Omega(\log \log n)$ time.  We assume survival for at least $\Omega(\log \log n)$ time because at this point the infection has either died out (with probability $\approx1/b$) or reached order $\log n$ vertices.
\begin{thm}
\label{survival-thm}
Consider the contact process, $\xi_t$, on the Erd\"os-R\'enyi random graph $G_1 \sim \mathcal{G}(n,p)$ with $np = n^a$, $np\lambda=b>1$, and $\xi_0 = \{v\} \subset V_1$, and let $r= \frac{2}{b-1}\log\log n$.   Then there exist constants $\eta_3, \eps, c>0$ depending on $b$ so that for any $\delta>0$
\begin{equation*}
\gprob{\probGone{\min_{t \in [\eta_3 \log n, e^{cn}]} \abs{\xi_t} \leq \eps n \ \middle | \ \abs{\xi_r}>0}{v} > \delta} \to 0.
\end{equation*}
\end{thm}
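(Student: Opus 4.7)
The plan is to decompose $[r, e^{cn}]$ into a growth phase $[r, \eta_3 \log n]$ during which $\abs{\xi_t}$ rises from a small seed to density $\eps$, followed by a stability phase $[\eta_3 \log n, e^{cn}]$ during which $\abs{\xi_t}$ remains above $\eps n$. I first claim that, conditional on $\{\abs{\xi_r}>0\}$, one has $\abs{\xi_r} = \Omega((\log n)^2)$ with high conditional probability. While $\abs{\xi_t}$ is still polylogarithmic, the $O((\log n)^2)$-neighborhood of $\xi_t$ in $G_1$ is a tree with $\gprob{\cdot}$-probability $1-o(1)$ (since $np=n^a$ forces such polylogarithmic neighborhoods to be cycle-free with high probability), so $\abs{\xi_t}$ can be sandwiched between two Galton--Watson processes with Poisson$(b)$ offspring distributions, up to lower-order corrections. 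Conditional on survival of the dominated branching process --- an event whose probability differs from that of $\{\abs{\xi_r}>0\}$ by $o(1)$ --- the Kesten--Stigum martingale $e^{-(b-1)t}\abs{\xi_t}$ converges to a strictly positive limit, which gives $\abs{\xi_r}$ of order $e^{(b-1)r} = (\log n)^2$.

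For the growth phase, the same branching comparison remains valid as long as $\abs{\xi_t} = o(n/\log n)$, since at that scale the expected number of transmissions onto already-infected vertices is negligible. Exponential growth at rate $b-1$ then carries $\abs{\xi_t}$ from $(\log n)^2$ up to $\eps n$ in time $\Theta(\log n)$, so any $\eta_3>1/(b-1)$ suffices. The minor subtlety of ensuring the infected set spreads across many distinct vertices, rather than piling up near the seed, is handled by the tree-like structure of $\mathcal{G}(n,p)$ up to radius $\Theta(\log n / \log (np))$ combined with a sprinkling argument to keep neighborhoods of active sites largely disjoint.

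The stability phase is the main obstacle. Here I would compare $\abs{\xi_t}$ to a birth-death chain with drift toward the quasi-equilibrium density $\rho^* = 1 - 1/b + o(1)$. The key ingredients are: (i) an expansion property --- with $\gprob{\cdot}$-probability $1-e^{-\Omega(n)}$, every $S \subset V_1$ with $\eps n / 2 \le \abs{S} \le n$ sends $(1+o(1))\abs{S}\abs{S^c}p$ edges to $S^c$; (ii) the resulting conditional drift $\E[d\abs{\xi_t}/dt \mid \xi_t] = \lambda\, e(\xi_t,\xi_t^c) - \abs{\xi_t}$ is $\Omega(n)$ toward $\rho^* n$ whenever $\abs{\xi_t} \in [\eps n, \rho^* n - \delta_0 n]$, while the quadratic variation of $\abs{\xi_\cdot}$ over unit intervals is $O(n)$; and (iii) an exponential-martingale large-deviation bound that converts (i) and (ii) into the estimate that the conditional probability of $\abs{\xi_t}$ dropping from $\rho^* n / 2$ down to $\eps n$ within any unit time interval is at most $e^{-cn}$. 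A union bound over $\lfloor e^{c'n} \rfloor$ unit intervals with $c'<c$ then completes the proof. The hardest technical point is making (iii) uniform in the state: the drift depends on the specific configuration of $\xi_t$, not only on $\abs{\xi_t}$, so the uniform expansion bound from (i) is essential for the argument.
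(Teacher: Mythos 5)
Your overall architecture matches the paper's: a growth phase driven by branching-process comparisons, followed by a stability phase driven by a graph-expansion bound, positive drift, and a large-deviation estimate iterated over exponentially many short time windows. The stability phase in the paper (equations~(\ref{survival-intervals})--(\ref{survival-short-intervals})) is precisely a random-walk-with-positive-drift domination — the concrete form of your exponential-martingale step — and both arguments hinge on a uniform edge-boundary lower bound for the current configuration. One genuine difference: the paper confines $\abs{\xi_t}$ to a thin band $[\eps n, \gam n]$ with $\gam \leq 2\eps$, so it only needs the $\eps$-isoperimetric inequality of Lemma~\ref{iso-lem} for \emph{small} sets ($\abs{S}\le 2\eps n$), whereas you propose a two-sided expansion bound for all $S$ with $\abs{S}\ge \eps n/2$ and work all the way up to the quasi-equilibrium density $\rho^* n$. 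Your version is provable (Bernstein plus a union bound over $e^{O(n)}$ sets, beaten by the $e^{-\Omega(n^{1+a})}$ concentration), and it yields the sharper picture that $\abs{\xi_t}/n \approx 1-1/b$; but the paper's narrower band is logistically simpler because the process cannot even exit $[\eps n, \gam n]$ in the short window $\tau$, making the iteration from one window to the next automatic. Your union-bound step needs the same care the paper takes: each length-$\tau$ window must end in a state at least as favorable as the one it started from, not merely avoid dropping below $\eps n$ mid-window.

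Two pieces of your growth phase are not right as stated. First, conditional on survival of the lower-bounding branching process, the Kesten--Stigum limit $W=\lim e^{-(b-1)t}Z_t$ is a.s. positive but has full support near $0$, so $\abs{\xi_r}=\Omega((\log n)^2)$ does \emph{not} hold with high conditional probability; what holds (and what Lemma~\ref{early-cp-lem} proves) is $\abs{\xi_r}>C\log n$ with conditional failure probability $O((\log n)^{-1})$ for any constant $C$, which is exactly enough to seed the growth phase. Second, the tree-neighborhood claim fails for $a\geq 1/2$: when $np=n^a$ the $1$-neighborhood of a single vertex already has $n^a$ vertices, so any polylogarithmic ball around $\xi_t$ contains $\Omega(n^{2a})>n$ potential vertices for $a>1/2$ and is certainly not cycle-free, and for $a=1$ even the induced subgraph on $\xi_t$ itself has many internal edges. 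The branching sandwich you want does not actually require local tree structure; the paper gets it from a degree bound alone (Bernstein applied to vertex degrees plus the observation that when $\abs{\xi_t}=O((\log n)^4)\ll np$, each infected vertex still has $(1-o(1))np$ uninfected neighbors). Replacing your tree-neighborhood claim by that degree-count argument repairs the gap without changing the rest of your outline.
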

The constants $\eta_3$ and $\eps$ in Theorem~\ref{survival-thm} are defined in Lemma~\ref{middle-cp-lem}, and the proof appears thereafter.  We use an approach similar to the one employed by Peterson~\cite{Peterson:2010} to prove exponential survival of the supercritical contact process on the complete graph with random vertex weights.  That is, we show that if the size of the contact process initially exceeds $\gamma n$ for some $\gamma > \eps$, then in a small constant amount of time, the size of the contact process is likely (with probability exponentially close to $1$) to have increased by the end of the time interval without ever having dropped below size $\eps n$.  The main difference between our proof and Peterson's is that we must rely on an isoperimetric inequality (Lemma~\ref{iso-lem}) to guarantee that the contact process has room to expand.

Our main question is, if the infection is started at a single vertex $v_0 \in V_1$, and the infection is able to spread, how long will it be before a positive fraction of the vertices in $V_2$ are infected?  In other words, how long does the infection take to cross a bridge between the two populations and spread throughout the second population?  Theorem~\ref{main-thm} answers this question.

\begin{thm}
\label{main-thm}
Fix $a \in (0,1]$.  Suppose $np = n^a$, $np\lambda = b >1$, and there are $\abs{B} = o(n^a/\log n \log \log n)$ bridges between $V_1$ and $V_2$.  Choose any $v_0 \in V_1$.  There exists $\eps>0$ so that if $\tau := \inf \{t >0 : \ \abs{\xi_t \cap V_2} > \eps n \}$, then for any $x \in [0,\infty)$ and any $\delta>0$
\begin{equation*}
\gprob{\abs{\prob{\frac{\tau}{n^a/\abs{B}} \leq x}{v_0} - \left(1-\frac{1}{b}\right) \left[1 - \exp \left(-\frac{(b-1)^2}{b} x\right)\right]} > \delta} \to 0.
\end{equation*}
\end{thm}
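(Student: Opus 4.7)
The plan is to reduce $\tau$ to the first arrival time of an approximately Poisson point process of \emph{successful} bridge crossings, and then read off the stated mixture-of-exponentials CDF from the initial survival/extinction dichotomy in $V_1$.  The analysis splits into three phases aligned with natural time scales: (i) an initial branching phase of duration $O(\log\log n)$ in $V_1$; (ii) a long quasi-equilibrium phase on $V_1$ lasting at least $e^{cn}$ during which bridges intermittently transmit; and (iii) a fast build-up (or quick extinction) phase of duration $O(\log n)$ in $V_2$ once a bridge fires.  The hypothesis $\abs{B}=o(n^a/\log n \log\log n)$ makes phases (i) and (iii) negligible on the macroscopic scale $n^a/\abs{B}$ and rules out overlapping bridge activity during those transient windows.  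I would begin by coupling the infection near $v_0$ to a continuous-time branching process with offspring mean $b>1$ up to time $r=\frac{2}{b-1}\log\log n$, using tree-likeness of small neighborhoods in $\mathcal{G}(n,p)$.  Standard branching estimates give $\prob{\abs{\xi_r}=0}{v_0}=1/b+o(1)$ and $\abs{\xi_r}=\Omega(\log n)$ on the complementary event; Theorem~\ref{survival-thm} then guarantees $\abs{\xi_t\cap V_1}\geq \eps n$ throughout $[\eta_3\log n, e^{cn}]$ with high probability, while the expected number of bridge events during $[0,\eta_3\log n]$ is $O(\abs{B}\lambda\log n)=o(1)$, so discarding bridges during this warm-up is harmless.

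Next, conditional on phase-(i) survival, for any bridge endpoint $u\in V_1$ and any $t$ in the window of Theorem~\ref{survival-thm}, I would show that $\prob{u\in\xi_t}{v_0}=(1-1/b)^2+o(1)$, so that after dividing by the phase-(i) survival probability $1-1/b+o(1)$ the conditional endpoint-infection probability is $1-1/b+o(1)$.  The tool is the self-dual graphical construction of Section~\ref{section:duality}: $\prob{u\in\xi_t}{v_0}$ equals the probability that the time-reversed dual contact process started from $\{u\}$ at time $t$ hits $\{v_0\}$.  Theorem~\ref{survival-thm} applied to the dual yields dual survival probability $1-1/b+o(1)$ with size $\Omega(n)$, and the mixing/uniformization of Lemmas~\ref{mixing-lem} and~\ref{dual-lem} implies that, given dual survival, $v_0$ lies in the dual with probability $1-1/b+o(1)$.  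Summing over the $\abs{B}$ bridges (whose $V_1$-endpoints are distinct with high probability), the instantaneous intensity of transmission events from $V_1$ into $V_2$ across bridges is $(1-1/b+o(1))\lambda\abs{B}$.

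A bridge crossing is declared \emph{successful} if the newly infected $v\in V_2$ then survives long enough to reach $\eps n$ infected vertices in $V_2$.  Applying the phase-(i) analysis to the freshly seeded contact process on $G_2$ --- independent of $G_1$ and, up to time $\tau$, of all other bridges --- this happens with conditional probability $1-1/b+o(1)$ and, when it does, requires only $O(\log n)=o(n^a/\abs{B})$ additional time.  Successful crossings therefore form, in the limit, a Poisson process of intensity $(\abs{B}/n^a)\cdot(b-1)^2/b$, so conditional on phase-(i) survival, $\tau\cdot\abs{B}/n^a$ converges in distribution to $\mathrm{Exp}((b-1)^2/b)$.  Multiplying this exponential CDF by the phase-(i) survival probability $1-1/b$ yields the claimed limiting formula.

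The main obstacle is phase (ii): showing that the bridge-endpoint infection probability is $1-1/b+o(1)$ uniformly enough in $t$ and $u$ to justify Poissonization, and establishing near-independence of different bridge events.  This requires marrying duality with the quantitative survival estimate of Theorem~\ref{survival-thm} applied to the dual and with the strong mixing from Lemmas~\ref{mixing-lem} and~\ref{dual-lem}, so that bridge endpoints can be treated as asymptotically typical vertices of $V_1$.  The small-bridge hypothesis $\abs{B}=o(n^a/\log n\log\log n)$ is precisely what permits the $O(\log n)$ warm-up and build-up windows to be absorbed into the macroscopic scale $n^a/\abs{B}$ and what enforces asymptotic independence between different bridge events; relaxing it would force a genuinely coupled analysis of the two communities.
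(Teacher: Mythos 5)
Your overall scaffolding — branching phase in $V_1$, quasi-equilibrium with Poissonized bridge crossings, fast re-ignition or quick extinction in $V_2$, and the resulting $\mathrm{Geom}((1-1/b)^2)\times\mathrm{Exp}(\lambda)$ picture for $\tau$ — matches the paper, and the negligibility calculations for the $O(\log n)$ warm-ups are correctly stated.  But there is a genuine gap in your phase-(ii) argument, which you yourself flag as the main obstacle.  You propose to run the dual started at $u_b$ all the way back to time $0$, apply Theorem~\ref{survival-thm} to it to get size $\Omega(n)$, and then conclude from Lemmas~\ref{mixing-lem} and~\ref{dual-lem} that ``$v_0$ lies in the dual with probability $1-1/b+o(1)$.''  Those mixing lemmas do not say that: Lemma~\ref{dual-lem} controls the probability that a small, well-mixed dual set hits a \emph{large} set $A$ with $\abs{A}\geq\eps n$, not the probability that a large dual set contains a \emph{single} prescribed vertex.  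Turning your version of the argument into a proof would require establishing that the quasi-stationary density of the supercritical contact process on $\mathcal{G}(n,p)$ is exactly $1-1/b+o(1)$, which the paper never proves (only the lower bound $\eps<1-1/b$ is established).  Worse, letting the dual grow to size $\Omega(n)$ would destroy the bookkeeping built into equation~\eqref{eps-def}: the isoperimetric lower bound on the forward process's edge boundary is only shown to tolerate $O((\log n)^4)$ excluded vertices, which is the size the dual attains when it is run for time $r=O(\log\log n)$, not $\Omega(\log n)$.

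The paper's proof avoids your obstacle entirely by running the dual $\zeta^k$ started at the bridge firing time $T^{(u_b,v_b)}_k$ \emph{only} for time $r+\tmix=O(\log\log n)$.  That is long enough for the dual to either die (probability $\to 1/b$, giving one factor of $1-1/b$ for survival) or grow to $\Omega(\log n)$ vertices which, after the random-walk coupling of Lemma~\ref{dual-lem}, are nearly uniformly distributed; such a set intersects the forward configuration $\xi_{T_k - r - \tmix}$ (which has $\geq\eps n$ vertices by Theorem~\ref{survival-thm} applied to the \emph{forward} process started at $v_0$) with probability $1-o(1)$.  Thus both survival factors come from the two branching phases, and the intersection step is asymptotically certain rather than another Bernoulli factor — there is no appeal to, or need for, a quasi-stationary density.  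This is exactly why the spacing condition in the proof reads ``interarrival times are at least $2r+\tmix$'' and not ``at least $O(\log n)$'': the dual only needs a $\log\log n$ window.  If you replace your phase-(ii) step with this short-dual-intersects-long-forward argument, your outline becomes essentially the paper's proof.
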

This says that when the infection survives long enough (which happens with probability $\approx 1- 1/b$), the distribution of $\tau$, the time at which the infection has spread to a positive fraction of vertices in $V_2$, is approximately exponential with rate $\abs{B}(b-1)^2/(bn^a)$.  
%Our proof works just the same for as many as $\abs{B} = o(n^a/\log n \log \log n)$ bridges, in which case the distribution of $\tau$ is approximately exponential with rate $\abs{B}(b-1)^2/(bn^a)$ on the event that the infection survives long enough.
The upper bound on the number of bridges that we can accommodate is because our proof requires that the amount of time between the first $\log n$ successive potential transmissions of the infection between the two communities is at least $\Omega(\log \log n)$, and the rate at which such potential transmissions occur is $\abs{B} \lambda = b\abs{B}/n^a$.  The maximum number of bridges allowable to guarantee the separation of timescales seen in Figure~\ref{simulations} should be $O(n^{a})$ as the following mean-field argument demonstrates.  For small times $t$, the number of infected vertices in $V_1$ at time $t$ is approximately $b^t$, since each infected vertex has expected degree $np$ and spreads the infection to each neighbor at rate $\lambda$.  Therefore, the number of infected vertices in $V_1$ will reach $\eps n$ at time $s = \log (\eps n) / \log b$.  The expected rate at which the infection is transmitted from $V_1$ to $V_2$ at time $t$ is approximately $b^t \lambda \abs{B}/ n = b^{t+1} \abs{B}/ n^{1+a}$, so the expected number of times the infection is transmitted from $V_1$ to $V_2$ before time $s$ is
\begin{align*}
\int_0^s \frac{b^{t+1} \abs{B}}{n^{1+a}} dt = \frac{b^{s+1} \abs{B} \log b}{n^{1+a}} \approx \frac{\abs{B} \eps b \log b}{n^a}.
\end{align*}
So, if $\abs{B} \gg n^a$ then the infection is likely to spread to $V_2$ before reaching size $\eps n$ in $V_1$.  It is worthwhile to note that if $V_1$ and $V_2$ are two halves of a homogeneous Erd\"os-R\'enyi random graph with mean degree $n^a$ then $\abs{B} \approx n^{1+a}$, so no separation of timescales should be observable.

Theorem~\ref{main-thm} generalizes easily to $N$ communities, $G_1, \ldots, G_N$, each independent and distributed as $\mathcal{G}(n,p)$ with $np=n^a$ ($N$ is fixed).  If $B_{ij}$ is the (possibly empty) set of bridge edges between communities $i$ and $j$, and $B_i = \cup_j B_{ij}$, then we assume $\max_i \abs{B_i} = o(n^a/\log n \log \log n)$ and for all $1\leq i,j,k,\ell \leq N$ there are constants $c, C>0$ such that whenever $\abs{B_{ij}}> 0$ and $\abs{B_{k\ell}}> 0$ then
\begin{align*}
c \leq \frac{\abs{B_{ij}}}{\abs{B_{k\ell}}} \leq C.
\end{align*}
Under this assumption, all pairs of communities have either a comparable number of bridges, or no bridges (are not directly connected), so we let $\beta_{ij} = \abs{B_{ij}}/ \max_{k,\ell} \abs{B_{k\ell}}$.  We can then define a process $\chi_t \in \{0,1\}^N$ such that $\chi_t(i) = 1$ if and only if community $i$ has at least $\eps n$ infected vertices at time $tn^a/\max_{k,\ell}\abs{B_{k\ell}}$.  If $np\lambda=b>1$ and $\chi_0 = (1, 0, 0, \ldots, 0)$, then $\chi_t$ converges in distribution on a finite time interval to the monotone stochastic process in which $\chi_t(i)$ flips from $0$ to $1$ at rate $\sum_j \chi_t(j) \beta_{ij} (b-1)^2/b$.  That is, at the community level, the infection process resembles an SI epidemic with inhomogeneous infection rates.

The assumption that the number of bridges between connected communities is of a single order of magnitude is not necessary, except that things become a bit more complicated when the number of bridges between communities can span multiple orders of magnitude.  In particular, we need to consider multiple time scales.  The time for the infection to pass from community $i$ to community $j$ is proportional to the `length' of the shortest path between the two, where a path from $i$ to $j$ is a sequence of communities $i = x_0, x_1, \ldots, x_k=j$ such that $\abs{B_{x_\ell x_{\ell+1}}}>0$, the `length' of this path is $1/\min_\ell \abs{B_{x_\ell x_{\ell+1}}}$ and the shortest path is the one with the minimal `length'.  This is because most time will be spent while the infection attempts to cross the narrowest bottleneck.

\subsection{Proof Outline}  For simplicity, we show the proof for the case where there is a single bridge edge, $B = \{(u_b, v_b)\}$ with $u_b \in V_1$ and $v_b \in V_2$.  The generalization to multiple bridges is straightforward, and we comment on this briefly in Section~\ref{main-proof}.

{\bf Step 1.} When the contact process starts from a single vertex, $\xi_0 = \{v_0\} \subset G_1$, for a small amount of time, $r = O(\log \log n)$, $\abs{\xi_t}$ is well approximated by a continuous time branching process with survival probability $1-1/b$, where $b = np\lambda$ (Lemmas~\ref{early-cp-lem} and~\ref{middle-cp-lem}).  By time $r$, the contact process has either died out, or is destined to survive for exponentially long with positive density by Theorem~\ref{survival-thm}.  The main difficulty in this part of the proof is managing `collision' events, where an infected vertex attempts to spread the infection to another already infected vertex.  Once the process grows beyond size $n^a$, these collision events potentially stifle the growth of $\xi_t$.  We overcome this obstacle by the use of the isoperimetric bound given in Lemma~\ref{iso-lem}.  This bound says that as long as the contact process contains at most a small fraction of vertices in $V_1$, then the number of edges between $\xi_t$ and $V\setminus \xi_t$ will be large.  We use this bound to show that while $\abs{\xi_t}$ is not too large, it stochastically dominates a random walk with positive drift.

{\bf Step 2.} We use the self-duality of the contact process, which essentially means that the time reversal of the contact process has the same distribution as the contact process -- we will carefully describe duality in Section~\ref{section:duality}.  The dual process started at time $t>0$, $\{\zeta_s^t\}_{s\in [0,t]}$, is constructed so that if it is started from a single vertex $\zeta_0^t = \{v\} \subset V$, then $\zeta_s^t \cap \xi_{t-s} \neq \emptyset$ for some $s \in (0,t)$ if and only if $v \in \xi_t$.  In the graphical construction of the contact process, edges attempt to transmit the infection at rate $\lambda$ independent of $\xi_t$, so we observe the dual process started from $u_b$ when the bridge edge $(u_b,v_b)$ attempts to transmit the infection.  The dual process will reach size $\Omega(\log n)$ by time $r$ with probability close to $1-1/b$.  The primary difficulty at this point is getting the dual process, which has grown backwards in time, to intersect with the contact process, which has size at least $\eps n$ ($G$ is not the complete graph, so it is possible for all of the vertices in the contact process to be far from the vertices in the dual process).  We do this by coupling the particles in the dual process with a random walk process in which each particle jumps according to an independent simple random walk on the vertices of $G$ and dies at rate $1$.  Those particles which survive for time $\tmix = O(\log \log n)$ and do not collide with other particles will all be well mixed (Lemmas~\ref{mixing-lem} and~\ref{dual-lem}), and each intersects with the contact process with probability larger than $\eps$, which implies that the two processes intersect with high probability.

{\bf Step 3.} At the times when the bridge edge $(u_b, v_b)$ attempts to transmit the infection (a Poisson process $\{T_k^{(u_b,v_b)}\}_{k}$ with rate $\lambda$) we start an independent dual process.  Each dual process to survive to time $r$ will intersect with $\xi_t$ and result in the infection spreading to $v_b$.  In turn, $v_b$ will spread the infection to $V_2$ with probability approaching $1-1/b$ by repeating the first step of the proof.  Therefore, the number of times that the bridge edge must attempt to transmit the infection approaches a Geometric$([1-1/b]^2)$ distribution, and the interarrival times between successive attempts are distributed as independent Exp$(\lambda)$.  So on the event that $\xi_r \neq \emptyset$, the time required to spread the infection to $V_2$ approaches an Exp$(\lambda[1-1/b]^2)$ distribution, and the probability that $\xi_r \neq \emptyset$ approaches $1-1/b$; this is the statement in Theorem~\ref{main-thm}.

Section 2 is devoted to proving two key lemmas about Erd\"os-R\'enyi random graphs with high degree -- an isoperimetric bound and a strong mixing time estimate.  In Section 3 we compare the early stages of the contact process with a branching process and prove Theorem~\ref{survival-thm}.  In Section 4 we prove a key lemma about the dual of the contact process, and in Section 5 we bring everything together for the proof of Theorem~\ref{main-thm}.  The Appendix contains some basic properties of branching processes that are used in Section 3.

{\bf Acknowledgements.} Thank you to Rick Durrett for numerous helpful conversations during the writing of this paper, and for his comments on the final draft.  Thank you to John McSweeney and Bruce Rogers for performing the original simulations of this model.  This work was started at SAMSI during the 2010-2011 Program on Complex Networks.

\section{Isoperimetric Inequality and Mixing Time Bounds}
%%%%%%% Isoperimetric inequality lemma
In this section we prove Lemma~\ref{iso-lem}, which gives a bound on the $\eps$-isoperimetric number (defined below) of an Erd\"os-R\'enyi random graph with large mean degree, and Lemma~\ref{mixing-lem}, which says that the random walk on the high-degree random graph is almost uniformly distributed after a constant number of steps.  These two properties of the random graph, which is fixed for all time, hold with ${\bf P}$-probability tending to $1$ as $n\to\infty$, so when we later consider the contact process on a random graph, we can condition on it having these properties.

\begin{defn}
Define the {\bf $\eps$-isoperimetric number}, $i_{\eps}(G)$, of a graph, $G = (V,E)$ as 
\begin{equation*}
i_{\eps}(G) = \min \left\{ \frac{\abs{\partial U}}{\abs{U}} \ \middle| \ U \subset V, \ \abs{U} \leq \eps \abs{V} \right\}
\end{equation*}
where $\partial U\subset E$ is the set of edges with exactly one end vertex in $U$.
\end{defn}

\begin{lem}
\label{iso-lem}
If $G \sim \mathcal{G}(n,p)$ is an Erd\"os-R\'enyi random graph on $n$ vertices with edge probability $p$ such that $np \geq 28 (\log n)^3$, then for any fixed $\eps > 0$
\begin{equation*}
i_{\eps}(G) \geq (1-\eps) np - (np)^{2/3}
\end{equation*}
with probability $1 - o(1)$ as $n \to \infty$.
\end{lem}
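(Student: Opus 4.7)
\textbf{Proof plan for Lemma~\ref{iso-lem}.} The plan is a standard Chernoff-plus-union-bound argument. For each fixed subset $U\subset V$ with $\abs{U}=k\leq \eps n$, the edge boundary $\abs{\partial U}$ is a sum of $k(n-k)$ independent Bernoulli$(p)$ indicators---one for each potential edge between $U$ and $V\setminus U$---so it is Binomial$(k(n-k),p)$ with mean $\mu_k=k(n-k)p$. Because $k\leq \eps n$, we have $\mu_k\geq k(1-\eps)np$, so the event $\abs{\partial U}/k<(1-\eps)np-(np)^{2/3}$ implies the cruder deviation $\abs{\partial U}<\mu_k-k(np)^{2/3}$.

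The first step is to apply the standard lower-tail Chernoff bound for the binomial. With deviation $t=k(np)^{2/3}$ and $\mu_k\leq knp$,
\begin{equation*}
\mathbb{P}\!\left(\abs{\partial U}<\mu_k-k(np)^{2/3}\right)\leq \exp\!\left(-\frac{t^2}{2\mu_k}\right)\leq \exp\!\left(-\frac{k(np)^{1/3}}{2}\right).
\end{equation*}
The key point is that the power $2/3$ on $np$ in the conclusion of the lemma is exactly the threshold that makes the Chernoff exponent scale like $k(np)^{1/3}$, large enough to beat the entropy cost of choosing $U$ once $np$ is polylogarithmic in $n$.

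The second step is a union bound over all $U\subset V$ with $\abs{U}=k$, of which there are at most $\binom{n}{k}\leq (en/k)^k\leq (en)^k$. Summing over $k=1,\dots,\lfloor \eps n\rfloor$,
\begin{equation*}
\gprob{i_\eps(G)<(1-\eps)np-(np)^{2/3}}\leq \sum_{k=1}^{\lfloor \eps n\rfloor}\exp\!\left(k\left[\log(en)-\frac{(np)^{1/3}}{2}\right]\right).
\end{equation*}
Under the hypothesis $np\geq 28(\log n)^3$, the bracketed quantity is at most $\log n+1-\tfrac{1}{2}\cdot 28^{1/3}\log n\leq -c\log n$ for some $c>0$ (using $28^{1/3}>3$), so the sum is dominated by its $k=1$ term, of order $n^{-c+1}$, and tends to $0$.

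The only subtlety is the bookkeeping of constants. The cube root on $np$ forces us to require $np\gg (\log n)^3$ so that $(np)^{1/3}/2$ dominates $\log(en)$ with enough margin to absorb the extra $\log n$ worth of terms in the summation over $k$; the specific constant $28$ is picked so that $28^{1/3}/2>1$ with a little room to spare, which is exactly what the geometric sum needs. I do not expect any genuine obstacle beyond this calculation---the argument does not use any structural property of $\mathcal{G}(n,p)$ beyond the fact that $\partial U$ is a binomial with the explicit mean $k(n-k)p$.
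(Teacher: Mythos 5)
Your proposal is correct and follows essentially the same route as the paper: for each size $k\leq \eps n$, recognize $\abs{\partial U}$ as Binomial$(k(n-k),p)$, apply a lower-tail concentration bound to get a factor $\exp(-\Omega(k(np)^{1/3}))$, and then union bound over the $\binom{n}{k}$ choices of $U$ and sum over $k$. The only cosmetic difference is that you use the multiplicative Chernoff lower tail $\mathbb{P}(X<\mu-t)\le e^{-t^2/2\mu}$ where the paper invokes Bernstein's inequality; both yield the same $k(np)^{1/3}$ exponent up to the constant in the denominator, and either comfortably beats the $\log\binom{n}{k}\le k\log(en)$ entropy term under the hypothesis $np\ge 28(\log n)^3$.
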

Note that this bound is essentially tight since if $U$ is any deterministic set of $\eps n$ vertices, then $\abs{\partial U} \sim \text{Binomial}(\abs{U} (1-\eps) n,p)$ so $\abs{\partial U} = \abs{U}(1-\eps)np(1+o(1))$ with high probability.  The proof is adapted from the proof of a similar result for the $\frac{1}{2}$-isoperimetric number of random regular graphs in~\cite{bollobas-88}.
\begin{proof}
Denote by $P(u, m)$ the probability of the event that $G$ contains a set of vertices $U \subset V$ with $\abs{U} = u$ and $\abs{\partial U} \leq m$.  We will have proven the lemma once we show that
\begin{equation*}
\sum_{u=1}^{\eps n} P(u, m(u)) = o(1)
\end{equation*}
where $m(u) := u \left[(1-\eps) np - (np)^{2/3} \right]$.  By Markov's inequality
\begin{align*}
P(u, m(u)) &\leq {n \choose u} \sum_{s = 0}^{m(u)} {u(n-u) \choose s} p^s (1-p)^{u(n-u) -s}\\
&= {n \choose u} \prob{X_u \leq m(u)}{}
\end{align*}
where $X_u \sim \text{Binomial}(u(n-u),p)$.  Bernstein's inequality, as it appears in~\cite{bernstein-ineq}, says that
\begin{equation*}
\prob{\abs{X_u - \E X_u} \geq t}{} \leq 2 \exp \left\{ \frac{-t^2}{2 ( \Var X_u + t/3)} \right\}.
\end{equation*}
Let $t = \E X_u - m(u) = u ( \eps n p - up + (np)^{2/3})$.  Note that $t \geq u (np)^{2/3}$, since $u\leq \eps n$, so
\begin{align*}
\prob{X_u \leq m(u)}{} &\leq \prob{\abs{X_u - \E X_u} \geq u (np)^{2/3}}{} \\
&\leq 2 \exp \left\{ \frac{-u^2 (np)^{4/3}}{2 \left[ u(n-u)p(1-p) + u(np)^{2/3}/3 \right]}  \right\} \\
&\leq 2 \exp \left\{ \frac{-u (np)^{2/3}}{2(np)^{1/3} + 2/3}  \right\} \\
&\leq 2 \exp \left\{ - \frac{1}{3} u(np)^{1/3} \right\},
\end{align*}
provided $np > 1/3$.  Applying the assumption that $np \geq 28 (\log n)^3$ completes the proof:
\begin{align*}
\sum_{u=1}^{\eps n} P(u, m(u)) &\leq 2 \sum_{u=1}^{\infty} \left[ n e^{- (np)^{1/3} / 3} \right]^u \\
&= 2 n e^{- (np)^{1/3} / 3} \left(1- n e^{- (np)^{1/3} / 3} \right)^{-1} = o(1).
\end{align*}
\end{proof}

We will make use of a random walk on a random graph in the proof of Theorem~\ref{main-thm}, and we will need a bound on the mixing time of this random walk. Loosely speaking, this is the number of steps that it takes for the random walk to be `close' to its stationary state.  Let
\begin{equation*}
\norm{\mu - \nu}_{TV} := \frac{1}{2} \sum_{v \in V} \abs{\mu(v) - \nu(v)} = \sum_{v \in V} (\mu(v) - \nu(v))_+ = \sup_{A\subset V} \abs{\mu(A) - \nu(A)}
\end{equation*}
denote the total variation distance between two probability measures, $\mu$ and $\nu$, on the vertices of a graph $G = (V,E)$.

Let $X_k$ be a discrete-time, simple random walk on the vertices of $G\sim \mathcal{G}(n,p)$ where $np = n^a$ and $a\in(0,1]$.  We denote the $k$-step transition probabilities of $X_k$ by $P^k(u,v)$ for $u,v \in V$.  Also, we will denote the probability measure on $V$ that corresponds to the $k^{\text{th}}$ step of the random walk started at $u$ by $P^k(u,\cdot)$.  Let $\pi$ be the stationary distribution for this random walk whenever it is uniquely defined.  Note that the random walk is ergodic (so $\pi$ is unique) asymptotically almost surely because $X_k$ is aperiodic ($G$ is not bipartite) and irreducible ($G$ is connected) with probability superpolynomially close to $1$.

%Then the mixing time of $X_k$, for our purposes, is defined as follows.
%\begin{defn}
%The {\bf mixing time}, {\em $\kmix (\alpha)$}, of the random walk, $X_k$, on the random graph $G \sim \mathcal{G}(n,p)$ with $np = n^{a}$, $a \in (0,1]$ is
%\begin{equation*}
%k_{\text{\em mix}}(\alpha) = \inf \left\{k \ \middle| \ \sup_{u \in V} \norm{P^k(u,\cdot) - \pi}_{TV} \leq \frac{1}{n^{1+\alpha}} \right\}
%\end{equation*}
%where $\alpha >0$, and $\pi$ is the unique stationary distribution of $X_k$.
%\end{defn}

Typically, the mixing time for a random walk, $X_k$, is defined to be the smallest $k$ such that $\sup_{u\in V} \norm{P^k(u, \cdot) - \pi} \leq \alpha$ for some fixed $\alpha \in (0,1/2)$.  In the proof of Lemma~\ref{dual-lem} we need tight control on $P^k(u,\cdot)$ for many vertices, $u$, simultaneously, so our goal is to show that for some constant $\kmix$ depending only on $a$,
\begin{equation*}
\sup_{u\in V}\abs{P^{\kmix}(u, A) - \frac{\abs{A}}{n}} \leq \abs{A} o(n^{-1})
\end{equation*}
for every $A\subset V$.  To say this we need the following lemma, which says that the stationary distribution of $X_k$ is almost uniform.
\begin{lem}
\label{stationary-lem}
Let $G\sim \mathcal{G}(n,p)$ with $np = n^a$, $a \in (0,1]$, and let $\pi$ be the stationary distribution for the simple random walk on $G$ (conditional on its existence and uniqueness).  If $\mu$ is the uniform probability distribution on $G$ ($\mu(v) = n^{-1}$ for all $v\in V$), then
\begin{equation*}
\abs{\pi(A) - \mu(A)} \leq 3n^{-(1+a/3)} \abs{A}
\end{equation*}
for all $A\subset V$ with probability $1 - \exp [-\Omega(n^{a/3}) ]$.
\end{lem}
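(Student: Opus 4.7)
The plan is to compute $\pi(v) - \mu(v)$ directly and then control it through concentration of the vertex degrees. Recall that the stationary distribution of the simple random walk on $G=(V,E)$ is $\pi(v) = d(v)/(2\abs{E})$ where $d(v)$ is the degree of $v$, while $\mu(v) = 1/n = (n-1)p/(n(n-1)p)$. Thus the ratio factorizes as
\begin{equation*}
\frac{\pi(v)}{\mu(v)} = \frac{n\, d(v)}{2\abs{E}} = \frac{d(v)}{(n-1)p} \cdot \frac{n(n-1)p}{2\abs{E}},
\end{equation*}
so it suffices to show that each of these two factors is within a multiplicative $O(n^{-a/3})$ of $1$ simultaneously for every $v$, on a $\gprob{\cdot}$-event of probability $1-\exp[-\Omega(n^{a/3})]$.

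First I would apply Bernstein's inequality (exactly in the form used in the proof of Lemma~\ref{iso-lem}) to each Binomial$(n-1,p)$ degree $d(v)$ with deviation $t=(np)^{2/3}=n^{2a/3}$. This yields a per-vertex tail bound of $2\exp(-c\,n^{a/3})$. Since $\log n = o(n^{a/3})$ for every $a\in(0,1]$, a union bound over the $n$ vertices delivers
\begin{equation*}
\max_{v\in V}\, \abs{d(v) - (n-1)p} \leq (np)^{2/3}
\end{equation*}
on an event of $\gprob{\cdot}$-probability $1-\exp[-\Omega(n^{a/3})]$. On the same event, summing the degree bounds yields $\abs{2\abs{E} - n(n-1)p} \leq n(np)^{2/3}$, so both factors in the product above are of the form $1 + O(n^{-a/3})$.

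Combining the two factor estimates gives $\abs{\pi(v)/\mu(v) - 1} \leq 3 n^{-a/3}$ uniformly in $v$ for $n$ sufficiently large, hence $\abs{\pi(v)-\mu(v)} \leq 3 n^{-(1+a/3)}$. The lemma then follows for arbitrary $A\subset V$ by the triangle inequality,
\begin{equation*}
\abs{\pi(A) - \mu(A)} \leq \sum_{v\in A} \abs{\pi(v) - \mu(v)} \leq 3 n^{-(1+a/3)} \abs{A}.
\end{equation*}

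There is no conceptual obstacle here; the argument is just concentration of Binomial degrees, and the hypothesis $np = n^a$ is precisely what allows the stretched-exponential Bernstein tail to beat the union bound over $n$ vertices. The only care required is tracking the constants so that the multiplicative error comes out to at most $3$ (rather than some larger constant), which can be arranged either by taking $n$ large to absorb the $1/(1-n^{-a/3})$ factor, or by slightly tightening the Bernstein deviation $t$.
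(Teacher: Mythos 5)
Your argument is correct and follows essentially the same route as the paper: express $\pi(v)=\deg(v)/(2\abs{E})$, apply Bernstein's inequality to concentrate every degree in $[n^a-n^{2a/3}, n^a+n^{2a/3}]$ simultaneously (the paper leaves the union bound implicit, which you spell out), and deduce the multiplicative bound on $\pi(v)/\mu(v)$ before summing over $A$. The explicit factorization into degree and edge-count pieces is a cosmetic rearrangement of the paper's one-line ratio estimate.
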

\begin{proof}
It is easily verified that $\pi(v) = \deg(v) / (2\abs{E})$ for all $v\in V$.  By Bernstein's inequality~\cite{bernstein-ineq}, $\deg(v) \in [n^a - n^{2a/3}, n^a + n^{2a/3}]$ for all $v\in V$ with probability $1 - \exp [-\Omega(n^{a/3}) ]$.  This implies that, for all $A\subset V$ and all sufficiently large $n$ ($\geq 2^{3/a}$),
\begin{align*}
\pi(A) &\leq \frac{\abs{A}}{n} \left[\frac{1 + n^{-a/3}}{1 - n^{-a/3}} \right] \\
& \leq \frac{\abs{A}}{n} \left[ 1 + 3 n^{-a/3} \right]
\end{align*}
with probability $1 - \exp [-\Omega(n^{a/3}) ]$.  Likewise, we have the corresponding lower bound $\pi(A) \geq (\abs{A}/n)[1 - 3 n^{-a/3}]$ for all $A$ with high probability, which proves the lemma, since $\mu(A) = \abs{A}/n$.
\end{proof}
Note that Lemma~\ref{stationary-lem} is slightly stronger than the statement that
\begin{equation*}
\norm{\pi - \mu}_{TV} \leq 3 n^{-a/3}
\end{equation*}
with probability $1 - \exp [-\Omega(n^{a/3}) ]$.

In~\cite{Dou:1992} it is proved that when $G\sim \mathcal{G}(n, d/(n-1))$, $d \in \{1, \ldots, n-1\}$, and $k \geq 0$ and $v\in V$ are fixed, then
\begin{equation*}
\E \norm{P^k(v,\cdot) - \mu}_{TV} \leq c \left( \frac{n}{d^k} + \frac{1}{d}\right)^{1/2}
\end{equation*}
for some absolute constant $c$.  (Note that $\mu$, the uniform probability distribution, appears in the statement, and not $\pi$.)  The problem is that this result does not provide a bound on the mixing time, which entails taking the supremum over all initial locations for the random walk.  This presents a problem for us, since we want to say that many independent random walks, started from different locations, will all be well mixed at the same time.  To remedy this for high degree random graphs, we have the next lemma.
\begin{lem}
\label{rw-tvd-lem}
Let $G\sim \mathcal{G}(n,p)$ with $np = n^{a}$ for $a\in(0,1]$.  If $\kappa = \floor{1+1/a}$, then
\begin{equation*}
\sup_{v\in V} \norm{P^{\kappa}(v,\cdot) - \pi}_{TV} = O(n^{-a/3})
\end{equation*}
with probability $1-o(1)$.
\end{lem}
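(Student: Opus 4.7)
My plan is to reduce the uniform mixing claim to an estimate on walk counts in $G$, using the concentration of degrees and of walk counts about their expectations.

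First, Lemma~\ref{stationary-lem} gives $\norm{\pi - \mu}_{TV} = O(n^{-a/3})$ with high probability (where $\mu$ is the uniform distribution on $V$), so by the triangle inequality it suffices to prove $\sup_v \norm{P^\kappa(v,\cdot) - \mu}_{TV} = O(n^{-a/3})$ with probability $1-o(1)$. I would then work on the event (which holds with probability $1 - \exp[-\Omega(n^{a/3})]$ by Bernstein's inequality and a union bound over the $n$ vertices) that every vertex has degree within $(np)^{2/3}$ of $np$. On this event, each length-$\kappa$ walk in $G$ contributes exactly $(np)^{-\kappa}(1 + O(n^{-a/3}))$ to $P^\kappa(u,v)$, so
\[
P^\kappa(u,v) = (1 + O(n^{-a/3}))\,\frac{W^\kappa(u,v)}{(np)^\kappa},
\]
where $W^\kappa(u,v)$ denotes the number of length-$\kappa$ walks from $u$ to $v$ in $G$. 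It therefore suffices to show that $W^\kappa(u,v) = (np)^\kappa/n \cdot (1 + O(n^{-a/3}))$ uniformly in $(u,v)$.

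The choice $\kappa = \floor{1+1/a}$ ensures $(np)^\kappa > n$, so $\E\, W^\kappa(u,v) = (np)^\kappa/n \cdot (1 + o(1))$ uniformly in $u,v$: the leading contribution comes from walks with all intermediate vertices distinct (counted by a falling factorial in $n$), while walks that revisit some vertex contribute corrections down by at least a factor $O(1/np) = O(n^{-a})$. To obtain concentration of $W^\kappa(u,v)$ around its expectation I would view it as a polynomial of degree $\kappa$ in the independent Bernoulli edge variables of $G$ and apply the Kim--Vu polynomial concentration inequality (or a carefully executed higher-moment bound), yielding a tail bound that comfortably beats the $n^2$ cost of a union bound over pairs $(u,v)$. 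Summing the resulting pointwise estimates $|P^\kappa(u,v) - 1/n| = n^{-1}\cdot O(n^{-a/3})$ over $v$ then delivers the total variation bound.

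The main obstacle is this last concentration step: the partial derivatives of $W^\kappa(u,v)$ with respect to each edge variable must be catalogued carefully in order to invoke Kim--Vu sharply, and the deviation estimate has to be tight enough both to survive the $n^2$ union bound and to deliver the target accuracy $O(n^{-a/3})$. Additional bookkeeping is needed for nearby pairs $(u,v)$ and for return probabilities $(u = v)$, where walks through short cycles can inflate the expected walk count and must be shown to contribute only lower-order corrections.
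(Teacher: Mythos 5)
Your reduction via Lemma~\ref{stationary-lem} and the triangle inequality matches the paper's opening move, and using degree concentration to turn walk counts into transition probabilities is the right instinct. But the argument as stated hinges on a pointwise claim that is actually false. You want $\left|P^{\kappa}(u,v) - 1/n\right| = n^{-1}\cdot O(n^{-a/3})$ uniformly, which requires a \emph{two-sided} estimate $W^{\kappa}(u,v) = (np)^{\kappa}/n\,(1 + O(n^{-a/3}))$ for all pairs. This fails for close pairs: take $a \in (1/3, 1/2)$ so $\kappa = 3$, and let $v$ be adjacent to $u$. Then the walks $u \to w \to u \to v$ and $u \to v \to w \to v$ alone give $W^{3}(u,v) \geq 2\deg(u) \approx 2n^{a}$, while the ``typical'' value is $(np)^{3}/n = n^{3a-1}$; for $a < 1/2$ one has $n^a \gg n^{3a-1}$, so $P^{3}(u,v)$ overshoots $1/n$ by a factor polynomial in $n$ for $\sim n^a$ vertices $v$. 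You acknowledge this as ``bookkeeping for nearby pairs,'' but it is the crux: these terms must be carved out and bounded separately, and the Kim--Vu concentration you propose (around the unconditional expectation) does not automatically control them.

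The paper sidesteps all of this by exploiting a one-sided feature of total variation that your proposal overlooks: since $\norm{\mu - P^{\kappa}(u,\cdot)}_{TV} = \sum_v (1/n - P^{\kappa}(u,v))_+$, only a \emph{lower} bound on $P^{\kappa}(u,v)$ is needed, and vertices where $P^{\kappa}(u,v)$ exceeds $1/n$ contribute nothing. Accordingly, the paper lower-bounds $P^{\kappa}(u,v)$ by counting \emph{self-avoiding paths} (not walks) of length $\kappa$ between $u$ and $v$, and gets those by revealing edges layer by layer to grow a ball of radius $\ceil{(\kappa-1)/2}$ around $u$ and a disjoint ball of radius $\floor{(\kappa-1)/2}$ around $v$, applying Bernstein's inequality at each layer. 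This requires no heavy polynomial concentration, automatically avoids the short-cycle inflation (the paths run through disjoint annuli), and handles the few $v$ inside $B_{k_u}(u)$ trivially since one only loses $\abs{B_{k_u}(u)}/n = O(n^{-a/3})$ from them. If you want to pursue your walk-counting route, you must either (i) reorganize the argument to use the one-sided $(\cdot)_+$ form so that close pairs can be discarded, or (ii) explicitly partition pairs by distance and separately bound the aggregate excess contributed by the $O(n^{k_u a})$ vertices near $u$ — both more delicate than the paper's layer-exposure construction.
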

The definition of $\kappa$ in Lemma~\ref{rw-tvd-lem} is such that $\kappa$ is the smallest integer strictly larger than $1/a$.
\begin{proof}
We will employ a simple path counting argument similar to an argument used by Lubetzky and Sly~\cite{LS:2010} to prove cutoff for the random walk on random regular graphs.  Let $\path_k(u,v)$ denote the number of paths in $G$ of length $k$ that start at $u$ and end at $v$.  As in the proof of Lemma~\ref{stationary-lem}, we can use Bernstein's inequality to bound the maximum and minimum degrees of $G$ as  $\deg(v) \in [n^a - n^{2a/3}, n^a + n^{2a/3}]$ for all $v\in V$ with probability $1 - \exp [-\Omega(n^{a/3}) ]$.  Observe that the probability that the random walk, $X_k$, traverses any path of length $k$, $(v_0,v_1),(v_1,v_2),\ldots,(v_{k-1},v_k) \in E$, is
\begin{equation*}
P^1(v_0,v_1) \cdot P^1(v_1,v_2) \cdots P^1(v_{k-1},v_k) \geq \frac{1}{n^{ka}} (1 - O(kn^{-a/3}))
\end{equation*}
with probability $1 - \exp [-\Omega(n^{a/3}) ]$.  Therefore,
\begin{equation*}
P^k(u,v) \geq \path_k(u,v)\frac{1}{n^{ka}} (1 - O(kn^{-a/3}))
\end{equation*}
for any $u,v\in V$ with high probability.  To obtain a lower bound on $\path_k(u,v)$, we introduce the following notation for balls of radius $k$ in $G$.  For any $u\in V$ and $k\geq 0$, let
\begin{align*}
B_k(u) &:= \{ v\in V \ | \ \dist(u,v)\leq k \} \\
\partial B_k(u) &:= B_k(u) \setminus B_{k-1}(u)
\end{align*}
where $\dist(u,v)$ denotes the length of the shortest path in $G$ from $u$ to $v$, and $\partial B_0(u) := \{u\}$.

For a pair of vertices $u, v \in V$, let $k_u = \ceil{\frac{\kappa-1}{2}}$ and $k_v = \floor{\frac{\kappa - 1}{2}}$, so that $k_u + k_v = \kappa -1$.  We will construct a ball of radius $k_u$ around $u$, then remove this ball from $V$ and construct a second ball around $v$ of radius $k_v$ whenever $v \notin B_{k_u}(u)$.  Since most vertices in $V$ are not within $k_u$ of $u$, we will have two disjoint sets of vertices for most pairs $u,v$.  The number of paths between $u$ and $v$ in $G$ is at least the number of edges between the vertex boundaries of these two balls, so we seek a uniform lower bound on this quantity.

To obtain bounds on $\abs{\partial B_{k_u}(u)}$, we start at $u$ and reveal edges layer by layer.  To start, by Bernstein's inequality,
\begin{align*}
n^a - n^{2a/3} \leq \abs{\partial B_{1}(u)} \leq n^a + n^{2a/3}
\end{align*}
with probability $1 - \exp [-\Omega(n^{a/3}) ]$.  By induction on $k$, we wish to show that $\abs{\abs{\partial B_k(u)} - n^{ak}} = O(n^{ak-a/3})$ for all $k\leq k_u$.  Assume that
\begin{align*}
\abs{\partial B_{k-1}(u)} &\geq n^{(k-1)a}(1 - O(n^{-a/3}))
\\ &\text{ and} \\
\abs{B_{k-1}(u)} &\leq  n^{(k-1)a}(1 + O(n^{-a/3})).
\end{align*}
Since $k\leq k_u$, we have that $(k-1)a<1/2$, so $\abs{V \setminus B_{k-1}(u)} = n - O(n^{1/2})$.  This means that there are at least $n^{(k-1)a+1}(1 - O(n^{-a/3}))$ and at most $ n^{(k-1)a+1}(1 + O(n^{-a/3}))$ potential edges between $\partial B_{k-1}(u)$ and $V\setminus B_{k-1}(u)$.  Therefore, by Bernstein's inequality,
\begin{align*}
\abs{\partial B_{k}(u)} &\geq n^{(k-1)a+1}p (1 - O(n^{-a/3})) - \left[n^{(k-1)a+1}p\right]^{2/3} = n^{ka}(1 - O(n^{-a/3}) \\
\end{align*}
with probability $1 - \exp[ -\Omega(n^{ka/3})]$, and likewise
\begin{align*}
\abs{\partial B_{k}(u)} &\leq n^{ka}(1 + O(n^{-a/3}))
\end{align*}
with probability $1 - \exp[ -\Omega(n^{ka/3})]$.  It immediate follows that $\abs{B_k(u)} \leq n^{ka}(1+O(n^{-a/3}))$, which concludes the induction argument.  Therefore, $\abs{\abs{\partial B_{k_u}(u)} - n^{k_u a}} = O(n^{k_u a - a/3})$ with probability $1 - \exp[-\Omega(n^{a/3})]$.

After exposing ${B_{k_u}(u)}$ we can employ the same argument starting from $v\in V\setminus B_{k_u}(u)$, but at each step avoiding the vertices in $B_{k_u}(u)$.  Let
\begin{align*}
B_{k}^{u}(v) &:= \{w \in V\setminus B_{k_u}(u) \ | \ \dist_{u}(v,w) \leq k\} \\
\partial B_k^u(v) &:= B_k^u(v)\setminus B_{k-1}^u(v),
\end{align*}
where $\dist_u(v,w)$ denotes the graph distance on the maximal subgraph of $G$ with vertex set $V\setminus B_{k_u}(u)$.  Note that when $a > 1/2$, $k_v = 0$, so $B_{k_v}^u(v) = \{v\}$. When $a \leq 1/2$, removing the vertices in $B_{k_u}(u)$ from $V$ does not affect any of the estimates made above, because $\abs{B_{k_u}(u)} = O(n^{k_u a}) = O(n^{(1+a)/2}) = O(n^{1 - a/3})$, so that $\abs{V \setminus (B_{k_u}(u) \cup B^u_{k-1}(v))} = n(1 - O(n^{-a/3}))$ at the induction step.  Therefore, $\abs{\abs{\partial B^u_{k_v}(v)} - n^{k_v a}} = O(n^{k_v a - a/3})$ with probability $1 - \exp[-\Omega(n^{a/3})]$.

Now we observe that every edge between the sets $\partial B_{k_u}(u)$ and $\partial B^u_{k_v}(v)$ contributes at least one path of length $\kappa$ to $\path_{\kappa}(u,v)$.  There are at least $\abs{\partial B_{k_u}(u)} \cdot \abs{\partial B^u_{k_v}(v)} \geq n^{(\kappa - 1)a}(1 - O(n^{-a/3}))$ potential edges between the two sets.  So by again applying Bernstein's inequality we have that
\begin{equation*}
\path_{\kappa}(u,v) \geq n^{\kappa a - 1}(1-O(n^{-a/3})) = \omega(1)
\end{equation*}
with probability $1 - \exp[-\Omega(n^{a/3})]$, and therefore that
\begin{equation*}
P^{\kappa}(u,v) \geq \frac{1}{n}(1 - O(n^{-a/3}))
\end{equation*}
with probability $1 - \exp[-\Omega(n^{a/3})]$.  By applying a union bound over all pairs $u\in V$ and $v\in V\setminus B_{k_u}(u)$, this bound on $P^{\kappa}$ holds for all such $u,v$ with probability $1 - o(1)$.  Therefore
\begin{align*}
\sup_{u\in V} \norm{\mu - P^{\kappa}(u, \cdot)}_{TV} &= \sup_{u\in V} \sum_{v\in V} \left(\frac{1}{n} - P^{\kappa}(u,v) \right)_+\\
&\leq \sup_{u\in V} \sum_{v\in V\setminus B_{k_u}(u)} \left(\frac{1}{n} - \frac{1}{n}(1 - O(n^{-a/3}))\right)_+ + \frac{1}{n} \abs{B_{k_u}(u)} \\
& = O(n^{-a/3})
\end{align*}
with probability $1-o(1)$.  Applying Lemma~\ref{stationary-lem} and the triangle inequality completes the proof.
\end{proof}

An immediate consequence of Lemmas~\ref{stationary-lem} and~\ref{rw-tvd-lem} is the following lemma, which we will use in the proof of Theorem~\ref{main-thm}.

\begin{lem}
\label{mixing-lem}
Let $G\sim \mathcal{G}(n,p)$ with $np = n^a$ for $a \in (0,1]$, and $P^k(u,\cdot)$ be probability measure on $V$ corresponding to the $k^{\text{th}}$ step of the simple random walk on $G$ started at $u$.  Let $\pi$ be the stationary distribution of the random walk, and $\mu$ be the uniform probability measure on $V$.  If $\kmix = 12\ceil{1/a}^2$ then
\begin{equation*}
\sup_{u\in V} \norm{P^{\kmix}(u,\cdot) - \pi}_{TV}  = O(n^{-2})
\end{equation*}
with probability $1-o(1)$.  Furthermore, 
\begin{equation*}
\sup_{u\in V} \abs{P^{\kmix}(u,A) - \mu(A)} \leq 4n^{-(1+a/3)} \abs{A}
\end{equation*}
for all $A\subset V$ with probability $1-o(1)$.
\end{lem}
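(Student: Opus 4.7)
I would derive Lemma~\ref{mixing-lem} as a direct corollary of Lemmas~\ref{stationary-lem} and~\ref{rw-tvd-lem} using two standard ingredients: submultiplicativity of the total variation distance, which amplifies the short-time bound of Lemma~\ref{rw-tvd-lem} at time $\kappa$ to a much sharper bound at $\kmix$, and the triangle inequality, which transfers the resulting bound from $\pi$ to the uniform measure $\mu$ via Lemma~\ref{stationary-lem}.

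For the first display, set $d(t) := \sup_{u\in V} \norm{P^t(u,\cdot) - \pi}_{TV}$ and $\bar{d}(t) := \sup_{u,v\in V}\norm{P^t(u,\cdot) - P^t(v,\cdot)}_{TV}$. Standard facts give $d(t) \leq \bar{d}(t) \leq 2d(t)$, the submultiplicativity $\bar{d}(s+t) \leq \bar{d}(s)\bar{d}(t)$, and the monotonicity of $d(t)$ in $t$. Hence $d(m\kappa) \leq \bar{d}(\kappa)^m \leq (2d(\kappa))^m$ for every integer $m \geq 1$. Choose $m := \floor{\kmix/\kappa}$, so $d(\kmix) \leq d(m\kappa)$. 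A short arithmetic check shows $m \geq 6/a$ for every $a \in (0,1]$: when $1/a \notin \Z$ one has $\kappa = \ceil{1/a}$ and $m = 12\ceil{1/a} \geq 12/a$; when $1/a = k \in \Z$ one has $\kappa = k+1$ and $m = \floor{12k^2/(k+1)} \geq 6k$. On the $(1-o(1))$-event supplied by Lemma~\ref{rw-tvd-lem}, $d(\kappa) \leq Cn^{-a/3}$ for a constant $C$ depending only on $a$, so
\begin{equation*}
d(\kmix) \leq (2C)^m n^{-ma/3} = O(n^{-2}),
\end{equation*}
since $ma/3 \geq 2$ and the prefactor $(2C)^m$ depends only on $a$.

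For the pointwise bound, the triangle inequality gives, for each $A \subset V$,
\begin{equation*}
\abs{P^{\kmix}(u,A) - \mu(A)} \leq \abs{P^{\kmix}(u,A) - \pi(A)} + \abs{\pi(A) - \mu(A)} \leq O(n^{-2}) + 3n^{-(1+a/3)}\abs{A},
\end{equation*}
using the first display and Lemma~\ref{stationary-lem}. Since $a \in (0,1]$ gives $1 + a/3 \leq 4/3 < 2$, we have $n^{-2} \leq n^{-(1+a/3)}$ for all large $n$, so the $O(n^{-2})$ term is absorbed into $n^{-(1+a/3)}\abs{A}$ for every nonempty $A$, yielding the claimed bound $\leq 4n^{-(1+a/3)}\abs{A}$ (the empty case being trivial).

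The substantive technical work is already contained in Lemmas~\ref{stationary-lem} and~\ref{rw-tvd-lem}, so there is no genuine obstacle; the only moderately delicate point is the arithmetic check $\floor{\kmix/\kappa} \geq 6/a$, which motivates the explicit factor $12\ceil{1/a}^2$ in the definition of $\kmix$: it guarantees at least $6/a$ iterations of the length-$\kappa$ submultiplicativity bound, which is exactly what is needed to push the decay rate from $n^{-a/3}$ down to $n^{-2}$.
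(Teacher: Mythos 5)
Your proposal is correct and follows essentially the same route as the paper: both amplify the $O(n^{-a/3})$ bound of Lemma~\ref{rw-tvd-lem} at time $\kappa$ via submultiplicativity of total variation distance (the paper cites the $(2\alpha)^\ell$ fact from Levin--Peres--Wilmer with $\ell = 6\ceil{1/a}$, you re-derive it from $d \le \bar d \le 2d$ and $\bar d(s+t)\le\bar d(s)\bar d(t)$, together with monotonicity of $d$), and then both obtain the second display by the triangle inequality together with Lemma~\ref{stationary-lem}. Your version is slightly more explicit about the monotonicity step and the arithmetic showing $\floor{\kmix/\kappa} \ge 6/a$, but the mathematical content is identical.
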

\begin{proof}
The first equation is due to the following fact, which is a consequence of Lemma 4.11 and equation (4.31) in~\cite{mixing:2009}.  If $\sup_{u\in V} \norm{P^k(u,\cdot) - \pi} \leq \alpha$ for some $k\in \mathbb{N}$ and $\alpha \in (0, 1/2)$, then for any integer $\ell > 0$, 
\begin{equation*}
\sup_{u \in V} \norm{P^{\ell k}(u,\cdot) - \pi} \leq (2 \alpha)^{\ell}.
\end{equation*}
In our case, by Lemma~\ref{rw-tvd-lem}, $\alpha = \text{Const}\cdot n^{-a/3}$, $k = \kappa \leq 2 \ceil{1/a}$ and $\ell = 6 \ceil{1/a}$.  The second equation follows from the first and Lemma~\ref{stationary-lem}.
\end{proof}

%%%%%%%  Section: Survival
\section{Survival of the Infection}
	In this section we will show that the contact process on an Erd\"os-R\'enyi random graph survives for exponentially long whenever it does not die out very quickly.  We will do this by making use of three comparisons with branching processes.  We will use lower and upper bounding branching processes to carefully control the behavior of the contact process started from a single vertex in its early stages.  Then we will use a second lower bounding branching process to show that the contact process will survive to linear size whenever it survives beyond its initial growth stage.  The relevant facts about these branching processes can be found in the Appendix.  Finally, once the contact process occupies a positive fraction of the vertices, we will compare it to a random walk with positive drift to show that it will continue to occupy a positive fraction of vertices for exponentially long.

% Lemma for early stage of CP
\begin{lem}
\label{early-cp-lem}
Let $r = \frac{2}{b - 1 - 2(np)^{-1/3}} \log\log n = \frac{2}{b-1}\log\log n + o(1)$, $\eta_1 = \frac{6b}{(b-1)\log b}$ and $\eta_2 = 3/\log b$.  If the contact process starts with a single vertex, $\xi_0 = \{v\}$, then
\begin{align}
\label{cp-upper-bd}
\prob{\sup_{t\leq r} \abs{\xi_t} > 2\eta_1 (\log n)^4}{v} = O(n^{-2}),\\
\prob{0<\abs{\xi_r} \leq \eta_2 \log n}{v} = O((\log n)^{-1}), \\
\prob{\abs{\xi_r} = 0}{v} = \frac{1}{b} + O((\log n)^{-1}).
\end{align}
with ${\bf P}$-probability $1-o(1)$.
\end{lem}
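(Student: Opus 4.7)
The plan is to sandwich $|\xi_t|$ between two continuous-time birth-death branching processes during its early stages. First, condition on the $\mathbf{P}$-probability $1-o(1)$ event from Bernstein's inequality that every vertex of $G_1$ has degree in $np(1 \pm (np)^{-1/3})$. For the upper bound, couple $|\xi_t| \leq Z_t^+$ where $Z_t^+$ is a continuous-time BP with per-particle birth rate $b^+ := (1+(np)^{-1/3})b$ and death rate $1$; each transmission in the contact process is dominated by a birth in $Z^+$, and the death mechanisms are coupled identically. For the lower bound, I would construct $Z_t^-$ with per-particle birth rate $b^- := (1-2(np)^{-1/3})b$ and death rate $1$, coupled so that $Z_t^- \leq |\xi_t|$ on the good event $\{\sup_{s \leq t}|\xi_s| \leq K\}$, with $K := 2\eta_1(\log n)^4$. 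The lower coupling uses a BFS-style exploration of Harris's graphical construction: each infected vertex has at least $np(1-(np)^{-1/3})$ neighbors, at most $K = o((np)^{2/3})$ of which can have been ``touched'' (ever in $\xi_s$), so counting only transmissions to untouched neighbors gives offspring rate at least $b^-$. Crucially, $b^\pm - 1 = b-1 \pm O((np)^{-1/3})$ matches the denominator in the definition of $r$, so $e^{(b^\pm - 1)r} = (\log n)^{2+o(1)}$.

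For the first equation I would bound $P(\sup_{t \leq r} Z_t^+ > K)$ by a first-passage decomposition. Let $\tau_K := \inf\{t : Z_t^+ \geq K\}$. Conditional on $\{\tau_K \leq r\}$ the strong Markov property writes $Z_r^+$ as the sum of $K$ IID supercritical BPs of duration $r-\tau_K \geq 0$, so Chernoff-type concentration on this sum gives $Z_r^+ \geq K/2$ with probability $\geq 1 - e^{-\Omega(K)}$. On the other hand, the explicit geometric distribution $Z_r^+ \mid Z_r^+ > 0 \sim \mathrm{Geom}(1-\beta_r)$ with $1-\beta_r \asymp (\log n)^{-2}$ gives $P(Z_r^+ \geq K/2) \leq \beta_r^{K/2-1} = \exp(-\Omega((\log n)^2))$. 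Chaining these yields $P(\tau_K \leq r) = O(n^{-2})$, with the specific constant $\eta_1 = 6b/((b-1)\log b)$ tuned so the bounds match.

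For the third equation, the explicit extinction formula
\begin{equation*}
P(Z_r^\pm = 0) = \frac{e^{(b^\pm - 1)r} - 1}{b^\pm e^{(b^\pm - 1)r} - 1} = \frac{1}{b} + O((np)^{-1/3}) + O((\log n)^{-2})
\end{equation*}
combined with the sandwich
\begin{equation*}
P(Z_r^+ = 0) \leq P(|\xi_r| = 0) \leq P(Z_r^- = 0) + O(n^{-2}),
\end{equation*}
where the $O(n^{-2})$ absorbs the event that the lower coupling fails, gives $P(|\xi_r|=0) = 1/b + O((\log n)^{-1})$. For the second equation, conditional on $Z_r^- > 0$ the size $Z_r^-$ is geometric with success parameter $\asymp (b-1)/(b(\log n)^2)$, so $P(0 < Z_r^- \leq \eta_2 \log n) \leq \eta_2 \log n \cdot O((\log n)^{-2}) = O((\log n)^{-1})$; since $Z_r^- \leq |\xi_r|$ on the good event, the same bound transfers to $|\xi_r|$ (with $\eta_2 = 3/\log b$ providing a convenient constant).

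The hard part will be the lower-bounding coupling, whose offspring mechanism depends on the evolving set of touched vertices; its clean BP structure is only recovered while the good event $\{\sup_t |\xi_t| \leq K\}$ holds. The upper bound from the first equation, with its strong $O(n^{-2})$ tail, is exactly what allows me to transfer conclusions about $Z^-$ back to $|\xi_r|$ throughout $[0,r]$ and to absorb the coupling-failure probability in the error terms of the latter two equations.
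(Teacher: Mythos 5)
Your proposal follows the paper's strategy almost exactly: Bernstein degree concentration, a sandwich $Z^-_t \le |\xi_t| \le Z^+_t$ between two continuous-time birth-death branching processes with rates $b(1\pm O((np)^{-1/3}))$, and the exact geometric law of $Z_t \mid Z_t>0$ to extract the estimates at time $r$. A few cosmetic differences are actually improvements: the paper takes $\beta_Y^n = b - 2(np)^{-1/3}$ and $\beta_Z^n = b + (np)^{-1/3}$, which for $b>1$ are not quite the dominating/dominated rates that the degree bounds deliver (the perturbation should carry a factor of $b$, as in your $b^\pm$); and you avoid the paper's two-step extension from $r_1 = \frac{2}{\beta_Z^n-1}\log\log n$ to $r$ by running everything directly at $r$, which works because $(b^\pm-1)r = 2\log\log n\,(1+o(1))$. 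Your first-passage argument for~(\ref{cp-upper-bd}) (strong Markov at $\tau_K$ + geometric upper tail) is the same mechanism as the paper's Lemma~\ref{bp-max-size}, inlined; you should replace $K/2$ by $cK$ for a small $c<1-1/b$, since each of the $K$ subtrees survives only with probability $\gtrsim 1-1/b^+$, but this does not change the $O(n^{-2})$ conclusion.

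There is one genuine imprecision: for the second estimate you write that since $Z_r^- \le |\xi_r|$ on the good event, the bound $P(0 < Z_r^- \le \eta_2\log n) = O((\log n)^{-1})$ ``transfers'' to $|\xi_r|$. But $\{0<|\xi_r|\le\eta_2\log n\}$ is \emph{not} contained in $\{0<Z_r^-\le\eta_2\log n\}$ up to the coupling failure; it is only contained in $\{Z_r^-\le\eta_2\log n\}$, and the extra piece $\{Z_r^-=0,\ |\xi_r|>0\}$ has probability that is not obviously small. To close this you need to control that piece using the extinction estimate: $P(Z_r^-=0,\ |\xi_r|>0) \le P(Z_r^-=0) - P(|\xi_r|=0) + O(n^{-2})$, and then the difference $P(Z_r^-=0)-P(|\xi_r|=0)$ is $O((\log n)^{-1})$ by the third equation's sandwich $P(Z_r^+=0)\le P(|\xi_r|=0)\le P(Z_r^-=0)+O(n^{-2})$ together with the explicit formula that makes both extinction probabilities $1/b + O((\log n)^{-2})$. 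This is exactly what the paper does: it bounds $P(|\xi_r|\le\eta_2\log n)$ from above using $Y_r^n$ and from below by $P(Z_r^n=0)+P(0<|\xi_r|\le\eta_2\log n)$, and lets the two $1/b$'s cancel. So the fix is available with tools you already have; just make the inclusion argument explicit rather than appealing to ``transfer.''
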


%proof of lemma for early stage.
\begin{proof} By Bernstein's inequality~\cite{bernstein-ineq}, the degree of any vertex in $G_1$ or $G_2$ is in the interval $[np - (np)^{2/3}, np + (np)^{2/3}]$ with {\bf P}-probability $1 - \exp[-\Omega((np)^{1/3})]$.  For each $n$, let $Y_t^n$ be a branching process in which each individual gives birth to a single offspring at rate $\beta^n_{Y} = b - 2(np)^{-1/3}$ and each individual dies at rate $1$.  Likewise, let $Z_t^n$ be a branching process in which each individual gives birth to a single offspring at rate $\beta^n_Z = b + (np)^{-1/3}$ and each individual dies at rate $1$.  It is clear that $Z_t^n$ stochastically dominates $\abs{\xi_t}$ restricted to the graph $G_1$ (ignoring edges between $G_1$ and $G_2$) for all time provided $Z_0^n = \abs{\xi_0}$.  By Lemma~\ref{bp-max-size}, when $r_1 = \frac{2}{\beta^n_Z - 1} \log \log n$ and $\eta_1 = \frac{6b}{(b-1)\log b} > \frac{6\beta^n_Z}{(\beta^n_Z-1)\log \beta^n_Z}$,
\begin{align*}
\prob{\sup_{t\leq r_1} \abs{\xi_t} > \eta_1 (\log n)^4}{v} &\leq \prob{\sup_{t\leq r_1} Z_t^n > \eta_1 (\log n)^4}{1}  = O(n^{-2}).
\end{align*}
We want to extend this bound slightly to time $r = \frac{2}{\beta^n_Y - 1} \log \log n$.  To do so, we observe that $r = r_1 + 3 (np)^{-1/3} \log \log n + o((np)^{-1/3})$.  The number of birth events during a time interval of length $r - r_1$ for the process $Z_t^n$, when $Z_t^n \leq 2 \eta_1 (\log n)^4$ throughout the interval, is stochastically bounded above by a Poisson($4 b \eta_1 (\log n)^4 (r - r_1)$) random variable for large $n$ (the leading coefficient of $4$, rather than $2$, over-compensates for the difference in rates between $\beta^n_Z$ and $b$).  Therefore, the probability of there being more than $7$ births during this interval is at most $O(n^{-2})$.  If at the start of the time interval $Z_t^n \leq \eta_1 (\log n)^4$, then the branching process cannot have exceeded size $2 \eta_1 (\log n)^4$ at any time during the interval, so we have equation~(\ref{cp-upper-bd}).

This means that with high probability, at any time $t \in [0,r]$ and for any vertex $v \in V_1$, we have that $\abs{\mathcal{N}(v) \cap \xi_t} \leq 2\eta_1 (\log n)^4 < (np)^{2/3}$ for sufficiently large $n$.  Therefore, for all $t\in [0,r]$, $\abs{\mathcal{N}(v) \setminus \xi_t} > np - 2(np)^{2/3}$, and $\abs{\xi_t}$ stochastically dominates~$Y_t^n$.  If $\eta_2 = 3/\log b < 3/\log \beta_Y^n$, then by Lemma~\ref{bp-size-lem} and equation~(\ref{cp-upper-bd})
\begin{align*}
\prob{\abs{\xi_r} \leq \eta_2 \log n}{v} &\leq \prob{Y_r^n \leq \eta_2 \log n}{1} + O(n^{-2}) \\
& = \prob{Y_r^n = 0}{1} + O((\log n)^{-1}) \\
& \leq \frac{1}{b - 2(np)^{-2/3}} + O((\log n)^{-1}) \\
& = \frac{1}{b} + O((\log n)^{-1}).
\end{align*}
The third line above follows because the extinction probability for $Y_t^n$ is $1/\beta_Y^n$, and the last line follows by expanding the geometric series for the first term.  Likewise, we have
\begin{align*}
\prob{\abs{\xi_r} \leq \eta_2 \log n}{v} &\geq \prob{Z_r^n = 0}{1} + \prob{0<\abs{\xi_r} \leq \eta_2 \log n}{v} \\
&\geq \prob{Z_{r_1}^n = 0}{1} + \prob{0<\abs{\xi_r} \leq \eta_2 \log n}{v} \\
& = \frac{1 - e^{-(\beta_Z^n -1) r_1}}{\beta_Z^n - e^{-(\beta_Z^n -1) r_1}} + \prob{0<\abs{\xi_r} \leq \eta_2 \log n}{v} \\
& = \frac{1}{b} - O((\log n)^{-2}) + \prob{0<\abs{\xi_r} \leq \eta_2 \log n}{v}.
\end{align*}
The second line follows because the events $\{Z_t = 0\}$ are increasing in $t$, and the third line is an exact computation of the transition probability.  Combining the last two inequalities (and equation~(\ref{cp-upper-bd}) to guarantee that $\xi_t$ dominates $Y_t^n$) completes the proof. 
\end{proof}

By Lemma~\ref{iso-lem}, the $\eps$-isoperimetric number of $G_j$ for $j=1,2$ is bounded below as
\begin{equation*}
i_{2\eps}(G_j) \geq (1-2\eps)np - (np)^{2/3}
\end{equation*}
for asymptotically almost every $G_j$.  We choose $\eps = \frac{1}{4}(1-1/b^{1/3})>0$ so that \begin{align}
\label{eps-def}
(1-2\eps - (np)^{-1/3} - O((\log n)^4(np)^{-1}))b > (1-3\eps)b > 1
\end{align}
for sufficiently large $n$.  Then Lemma~\ref{iso-lem} applied to $i_{2\eps}(G_j)$ implies that \begin{align}
\label{edge-boundary}
\abs{\mathcal{N}(\xi_t) \setminus \xi_t} \geq (1-3\eps) np \abs{\xi_t}
\end{align}
for large $n$ whenever $\abs{\xi_t} \leq 2\eps n$ for asymptotically almost every $G_j$.  The term $O((\log n)^4(np)^{-1})$ appears in~(\ref{eps-def}) because in the proof of Theorem~\ref{main-thm} it will be necessary to avoid as many as $O((\log n)^4)$ vertices that have been observed by the dual process defined in the next section.  Since~(\ref{edge-boundary}) is a property of the graph, which is fixed for all time, we will assume that this inequality holds for the remainder of the proof.  This means that during the time interval $[r,T_{2\eps}]$, where
\begin{align*}
T_{2\eps} = \inf\{t>r : \abs{\xi_t} > 2\eps n\},
\end{align*}
$\abs{\xi_t}$ stochastically dominates a branching process with per-capita birth rate $(1-3\eps) b>1$ and death rate $1$.  Denote this branching process by $W_t$.  Then
\begin{align*}
\prob{T_{2\eps} \geq s \ \middle|\ \eta_2 \log n < \abs{\xi_r} < \eta_1 (\log n)^4}{} &\leq \prob{W_s \leq 2 \eps n \ \middle| \ W_r = \eta_2 \log n}{} \\
&\leq \prob{W_s \leq 2 \eps n \ \middle| \ W_r = 1}{}^{\eta_2 \log n},
\end{align*}
because the event that $W_s \leq 2\eps n$ given that $W_r = \eta_2 \log n$ implies that all of the $\eta_2 \log n$ families at time $r$ must not have exceeded size $2 \eps n$ by time $s$.  If we let $s = \frac{3}{(1-3\eps)b-1} \log n + r$, then Lemma~\ref{bp-size-lem} implies that
\begin{align*}
\prob{W_s \leq 2 \eps n \ \middle| \ W_r = 1}{}^{\eta_2 \log n} &\leq \left(\frac{1}{(1-3\eps)b} + O(n^{-2})\right)^{\eta_2 \log n} \\
&\leq n^{- 3 - 3\log_b (1-3\eps)} \exp [{O(n^{-2} \log n)}] \\
& = O(n^{-2}).
\end{align*}
In the second line above we used the bound $1+x \leq e^x$, and in the last line we used the fact that $\eps = \frac{1}{4}(1- {1/b^{1/3}})$ so $-3\log_b(1-3\eps) < 1$.  Together with Lemma~\ref{early-cp-lem}, the last two inequalities imply the next lemma.
\begin{lem}
\label{middle-cp-lem}
With $r$ defined as in Lemma~\ref{early-cp-lem}, $\eps = \frac{1}{4}(1- {1/b^{1/3}})$ and $\eta_3 = \frac{4}{(1-4\eps)b -1}$ then
\begin{align}
\prob{T_{2\eps} \geq \eta_3 \log n \ \middle | \ \abs{\xi_r}>0}{v} = O((\log n)^{-1}).
\end{align}
\end{lem}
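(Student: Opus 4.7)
The plan is to combine Lemma~\ref{early-cp-lem} with the isoperimetric bound of Lemma~\ref{iso-lem} to reduce the claim to a branching-process estimate, then to apply Lemma~\ref{bp-size-lem} from the Appendix. The dominant error term comes from Lemma~\ref{early-cp-lem} and is already of the target order $O((\log n)^{-1})$; the remaining part of the argument will be much sharper.

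First I would condition on $\abs{\xi_r} > 0$ and use Lemma~\ref{early-cp-lem} to restrict to the event $\eta_2 \log n < \abs{\xi_r} < \eta_1 (\log n)^4$, which has conditional probability $1 - O((\log n)^{-1})$. Next I invoke (\ref{edge-boundary}), a consequence of Lemma~\ref{iso-lem}, to conclude that whenever $\abs{\xi_t} \le 2\eps n$ the cut between $\xi_t$ and its complement contains at least $(1-3\eps) np \abs{\xi_t}$ edges. Because each such edge carries an independent rate-$\lambda$ transmission clock while each infected vertex recovers at rate $1$, a standard coupling via the graphical construction produces a continuous-time branching process $W_t$ with per-capita birth rate $(1-3\eps)b > 1$ and unit death rate, so that $\abs{\xi_t}$ stochastically dominates $W_t$ throughout $[r, T_{2\eps}]$. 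Starting $W_r$ from $\eta_2 \log n$ independent particles (one per vertex of $\xi_r$), the event $\{W_s \le 2\eps n\}$ requires every family to remain below $2\eps n$, so its probability factors into a product over families.

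I would then set $s = r + \tfrac{3}{(1-3\eps)b - 1}\log n$. Since $(1 - 4\eps)b - 1 < (1 - 3\eps)b - 1$ and $r = O(\log \log n)$, we have $s \le \eta_3 \log n$ for all large $n$, so it suffices to bound $\prob{W_s \le 2\eps n}{}$ in place of $\prob{T_{2\eps} \ge \eta_3 \log n}{v}$. Lemma~\ref{bp-size-lem} then bounds the probability that a single family fails to exceed $2\eps n$ by $\tfrac{1}{(1-3\eps)b} + O(n^{-2})$, since a surviving supercritical family reaches linear size on the $\log n$ time scale. Taking the product over the $\eta_2 \log n$ independent families and using the choice $\eps = \tfrac{1}{4}(1 - 1/b^{1/3})$, which makes $-3\log_b(1-3\eps) < 1$, yields a bound of order $n^{-2}$. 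Combined with the conditioning error $O((\log n)^{-1})$ from the first step, this gives the stated bound.

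The only point that requires real care is the stochastic-domination step: one must couple $W_t$ to $\abs{\xi_t}$ in such a way that the $\eta_2 \log n$ families are simultaneously independent and each dominated. This is achieved by exploring the graphical construction outward from the seeds in $\xi_r$ and using (\ref{edge-boundary}) to guarantee that a sufficient supply of fresh healthy neighbors remains throughout $[r, T_{2\eps}]$, so collisions with already-infected vertices are absorbed into the slack $(1-3\eps)b$ versus $b$. Everything else is just assembling Lemmas~\ref{early-cp-lem}, \ref{iso-lem}, and \ref{bp-size-lem}.
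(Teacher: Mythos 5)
Your proposal follows the paper's proof essentially verbatim: conditioning via Lemma~\ref{early-cp-lem} to restrict to $\eta_2 \log n < \abs{\xi_r} < \eta_1 (\log n)^4$, using the isoperimetric bound~(\ref{edge-boundary}) to dominate $\abs{\xi_t}$ by the supercritical branching process $W_t$ with birth rate $(1-3\eps)b$ on $[r, T_{2\eps}]$, factoring over the $\eta_2\log n$ independent families, and applying Lemma~\ref{bp-size-lem} at $s = r + \tfrac{3}{(1-3\eps)b-1}\log n$ together with the choice $-3\log_b(1-3\eps)<1$ to get an $O(n^{-2})$ contribution. The only cosmetic addition is that you spell out $s \le \eta_3\log n$, which the paper leaves implicit in its choice of $\eta_3$; the argument is correct.
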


This means that if the contact process is able to survive to time $r$, which happens with probability $1/b + O((\log n)^{-1})$, then it will reach size $2\eps n$ before time $\eta_3 \log n$.  Theorem~\ref{survival-thm} shows that if the contact process, $\xi_t$, survives to time $r$, then it will continue to survive for exponentially long.  In particular, it will survive long enough to spread to the second component.

%%% Contact Process Survival Prop
\vspace{.2cm}
\noindent
{\bf Theorem~\ref{survival-thm}. }{\it
Consider the contact process, $\xi_t$, on the Erd\"os-R\'enyi random graph $G_1\sim \mathcal{G}(n,p)$ with $np=n^a$, $np\lambda = b$ and $\xi_0 = \{v\}\subset V_1$, and let $r=\frac{2}{b-1} \log \log n$.  Then for $\eta_3>0$ and $\eps>0$ as in Lemma~\ref{middle-cp-lem} and a constant $c>0$ depending on $b$, for any $\delta > 0$
\begin{equation*}
\gprob{\probGone{\min_{t \in [\eta_3 \log n, e^{cn}]} \abs{\xi_t} \leq \eps n \ \middle | \ \abs{\xi_r}>0}{v} > \delta} \to 0.
\end{equation*}
}

For the proof of Theorem~\ref{main-thm}, we will apply Theorem~\ref{survival-thm}, but with as many as $O((\log n)^4)$ vertices removed from $G_1$ at any time during the process.  This added restriction has no effect on the proof of Theorem~\ref{survival-thm}.  Our approach is similar to that of~\cite{Peterson:2010} for the contact process on the complete graph with random edge weights.

\begin{proof}
We intend to show that there exist constants $\del \in (\eps, 2\eps]$ and $\tau, C>0$ so that 
\begin{equation}
\label{survival-intervals}
\inf_{x>\del n} \prob{\abs{\xi_{\tau}} > \del n, \ \min_{t \in [0,\tau]} \abs{\xi_t} > \eps n \ \middle | \ \abs{\xi_0} = x}{} \geq 1 - e^{-Cn},
\end{equation}
for all sufficiently large $n$.  This means that with probability exponentially close to $1$, if the size of the contact process initially exceeds $\del n$, then at time $\tau$ the size of the contact process will again exceed $\del n$ and will not have dropped below $\eps n$ along the way.  By subdividing the time interval $[0,e^{Cn/2}]$ into $e^{Cn/2}/\tau$ intervals of length $\tau$, this implies that
\begin{equation}
\label{long-time-survival}
\sup_{x>\del n} \prob{\min_{t \in [0, e^{Cn/2}]} \abs{\xi_t} \leq \eps n \ \middle| \  \abs{\xi_0} = x}{} \leq \frac{1}{\tau}e^{-Cn/2}.
\end{equation}
By Lemma~\ref{middle-cp-lem}, $T_{2\eps} < \eta_3 \log n$ with probability $1-o(1)$ conditional on $\abs{\xi_r}>0$.  The Strong Markov Property and equation~(\ref{long-time-survival}) imply the result with $c = C/2$.

To prove equation (\ref{survival-intervals}), first we observe that by monotonicity of the contact process, it suffices to prove the statement with initial size $x = \del n$.  We will actually prove the stronger statement that for $\gam = \min \{ \eps (1-4\eps) b, \ 2\eps\}$ (recall that $(1-4\eps)b = b^{2/3}>1$), there exist $\tau, C>0$  and $\del \in (\eps, \gam)$ such that
\begin{equation}
\label{survival-short-intervals}
\prob{\abs{\xi_\tau} > \del n, \ \abs{\xi_t} \in [\eps n, \gam n]  \ \forall t \in [0,\tau] \ \middle| \ \abs{\xi_0} = \del n}{} \geq 1 - e^{-Cn}.
\end{equation}
The difference between the events in equations (\ref{survival-intervals}) and (\ref{survival-short-intervals}) is that in the latter the size of the contact process is also not allowed to exceed size $\gam n$.

The total jump rate of $\abs{\xi_t}$ is at most $O(n)$ for all $t$.  So, for sufficiently small $\tau>0$ (depending on $b$), the probability that the number of jumps in the time interval $[0,\tau]$ exceeds $\frac{1}{4}(\gam - \eps) n$ is at most $e^{-C' n}$ for some $C' >0$ depending on $\tau$ and $b$.  If we choose $\del = \frac{1}{2}(\eps + \gam)$, then $\abs{\xi_0} = \del n$ implies that $\abs{\xi_t} \in [\eps n, \gam n]$ for all $t \leq \tau$ with probability exceeding $1 - e^{-C'n}$.  While $\abs{\xi_t} \in [\eps n, \gam n]$, the maximum rate at which $\abs{\xi_t} \to \abs{\xi_t}-1$ is $\gam n\leq \eps (1-4\eps)b n < \eps (1-~3\eps)b n$, which is the minimum rate at which $\abs{\xi_t} \to \abs{\xi_t} + 1$ in this interval.  Therefore, given that $\abs{\xi_t} \in [\eps n, \gam n]$ for all $t \leq \tau$, $\abs{\xi_t}$ stochastically dominates a random walk with positive drift up to time $\tau$.  Specifically, $\abs{\xi_t}$ stochastically dominates $X_t$, where $X_t$ is the continuous time random walk that jumps to $X_t +1$ at rate $\eps (1-3\eps)bn$ and to $X_t - 1$ at rate $\eps(1-4\eps)b n$.  By a standard large deviations argument for random walks, $\prob{X_\tau \leq \del n}{\del n} \leq e^{-C'' n}$, where $C''>0$ depends on $\tau$ and~$b$.  Choosing $C< \min \{C', C''\}$, we have demonstrated equation~(\ref{survival-short-intervals}) for all sufficiently large $n$, and thus completed the proof of Proposition~\ref{survival-thm}.
\end{proof}

%%%%%%%%% Section:
%%%%%%%%%  Duality
\section{Duality}
\label{section:duality}
It is well known that the contact process is self-dual in the following sense.  For any two sets of vertices $A,B\subset V$ and $t> 0$
\begin{equation}
\label{duality-eqn}
\prob{\xi_t^A \cap B \neq \emptyset}{} = \prob{\xi_t^B \cap A \neq \emptyset}{}.
\end{equation}

For our purposes, the way to understand the contact process duality is through the graphical representation of the process (see, for example, Part I of~\cite{Liggett:1999}, which we paraphrase here).  To each vertex $v\in V$ we assign a rate $1$ Poisson process with jump times $\{T^v_k\}_{k=1}^{\infty}$, and to each ordered pair of vertices $(u,v)$ joined by an edge in $G$ ($\{u,v\}\in E$) we assign a rate $\lambda$ Poisson process with jump times $\{T^{(u,v)}_k\}_{k=1}^{\infty}$.  All of these Poisson processes are independent of one another.  To construct the contact process graphically, we begin by drawing the space-time axes $G \times [0,\infty)$.  For each $k\in \N$ and $v\in V$, we draw a recovery dot, $\bullet$, at the point $(v,T_k^v) \in G\times [0,\infty)$.  For each $k\in \N$ and ordered pair $(u,v)$ such that $\{u,v\}\in E$, we draw an infection arrow, $\rightarrow$, from $(u,T_k^{(u,v)})$ to $(v, T_k^{(u,v)})$.

We say that there is an active path from $(v_0,t_1)$ to $(v_{\ell},t_2)$, with $t_1<t_2$, if there is a sequence of arrows between $v_0$ and $v_{\ell}$, 
\begin{equation*}
t_1\leq T_{k_1}^{(v_0, v_1)} < T_{k_2}^{(v_1,v_2)} < \cdots < T_{k_{\ell}}^{(v_{\ell-1}, v_{\ell})} \leq t_2,
\end{equation*}
such that there are no recovery dots encountered along the way, 
\begin{align*}
\{ T_k^{v_i} \in [T_{k_{i}}^{(v_{i-1},v_{i})}, T_{k_{i+1}}^{(v_{i},v_{i+1})}) \} = \emptyset  \ \ \forall i = 1,\ldots, \ell-1 \\
\{T_k^{v_0} \in [t_1, T_{k_1}^{(v_0,v_1)}) \} = \{T_k^{v_{\ell}} \in [T_{k_{\ell}}^{(v_{\ell-1},v_{\ell})}, t_2] \} = \emptyset.
\end{align*}
We now have that $v\in \xi_t$ if and only if there is a vertex $u\in \xi_0$ such that there is an active path from $(u,0)$ to $(v,t)$.  Therefore, by tracing the arrows in reverse, we can determine for each $v\in V$ whether $v\in \xi_t$.  Because the Poisson processes determining arrows in each direction between pairs of adjacent vertices are iid, we have that the sets
\begin{align*}
&\{u \in V \ | \ \exists\, v\in A \text{ s.t. there is an active path from } (u,0) \text{ to } (v,t)\} \ \text{ and}\\
&\{v \in V\ |\ \exists\, u\in A \text{ s.t. there is an active path from } (u,0) \text{ to } (v,t)\}
\end{align*}
are equal in distribution, which is equivalent to the self duality of the contact process as stated in equation~(\ref{duality-eqn}).  The dual process of the contact process can thus be viewed as a reversal of the arrows in the graphical representation.

Let $\{u_b, v_b\} \in E$ be the bridge edge between $V_1$ and $V_2$, such that $u_b \in V_1$ and $v_b \in V_2$.  We will use the graphical representation of the dual of the contact process to determine whether the vertex $u_b$ is infected at the times $\tbridge_k$.  The idea is that by Theorem~\ref{survival-thm} there are at least $\eps n$ vertices infected in $V_1$ at the times $\tbridge_k- r - \tmix$ (where $\tmix = O(\log \log n)$ will be defined later).  By Lemma~\ref{early-cp-lem}, the probability that the dual process started at $(u_b,\tbridge_k)$ survives to time $\tbridge_k - r$ is approximately $1- 1/b$.  If the dual process does survive, then by coupling the dual process with a random walk process, we will show that in time $\tmix$ (going backwards in time still) at least one of the active vertices in the dual process will coincide with one of the $\eps n$ infected vertices in the contact process at time $\tbridge_k - r - \tmix$ with high probability.  Therefore, $u_b$ will be infected at time $\tbridge_k$, which will immediately result in $v_b$ becoming infected.  In turn, $v_b$ starts a widespread infection in $V_2$ with probability close to $1 - 1/b$ by Lemma~\ref{early-cp-lem} and Theorem~\ref{survival-thm}.

To begin, we let $\zeta_t^k$ be the dual process on $G_1$ started at $(u_b,\tbridge_k)$.  Note that $\{\tbridge_k\}_k$ are random times, but they are independent of $\{T_k^{(u,v)}\}_k$ and $\{T_k^v\}_k$ for all ordered edges $(u,v)$ and vertices $v$ in $G_1$. The interpretation of $\zeta_t^k$ is that $u_b \in \xi_{\tbridge_k}$ if and only if $\zeta_t^k \cap \xi_{\tbridge_k - t} \neq \emptyset$.  Also note that $\prob{\zeta_t^k \cap B \neq \emptyset}{u_b} = \prob{\xi_t \cap B \neq \emptyset}{u_b}$ for all $B\subset V_1$ by the duality equation~(\ref{duality-eqn}).  Therefore we can apply Lemma~\ref{early-cp-lem} to $\zeta_t^k$ to say that $\abs{\zeta_r^k}>0$ with probability differing from $(1-1/b)$ by at most $O((\log n)^{-1})$, and when $\abs{\zeta_r^k}>0$,  $\abs{\zeta_r^k} > \eta_2 \log n$ with probability $1-O((\log n)^{-1})$.  The purpose of the next lemma is to show, via a coupling with random walks, that whenever $\abs{\zeta_r^k} > \eta_2 \log n$, we have $u_b \in \xi_{\tbridge_k}$ with high probability.

\begin{lem}
\label{dual-lem}
If $\zeta_{t}^k$ is the dual of the contact process started at $(u_b,\tbridge_k)$, $\eps>0$ and $\tmix = \frac{2}{b+1} \log \log n$ then for all $\delta >0$
\begin{align*}
\gprob{\sup_{A\subset V_1,\, \abs{A}\geq \eps n}\probGone{\zeta_{r+\tmix}^k \cap A = \emptyset \ \middle| \ \abs{\zeta_r^k} \geq \eta_2 \log n}{} > \delta}\to 0.
\end{align*}
\end{lem}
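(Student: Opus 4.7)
The plan is to couple each particle of the dual process at time $r$ with an independent continuous-time random walk on $G_1$, then apply the mixing bound of Lemma~\ref{mixing-lem} to show that enough walkers survive and hit $A$. Condition on $\abs{\zeta_r^k} \geq \eta_2 \log n$, and pick $N = \floor{\eta_2 \log n}$ distinct vertices $v_1, \ldots, v_N \in \zeta_r^k$. For each $i$, define a process $X^i$ on $V_1$ by $X_0^i = v_i$ that evolves over $[r, r+\tmix]$ according to the graphical construction of $\zeta^k$: while $X^i$ is alive and located at $v$, it is killed at the first recovery mark at $v$, and jumps along the first outgoing arrow out of $v$ (to the corresponding neighbor). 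Because the trajectory of a surviving walker traces an active path in the graphical representation, if $X^i$ is alive at time $r+\tmix$ and located at $w$, then $w \in \zeta_{r+\tmix}^k$, since $v_i \in \zeta_r^k$ already lies on an active path back to $(u_b, \tbridge_k)$.

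Two walkers occupying distinct vertices use disjoint sets of Poisson processes, so the walkers are mutually independent up to the first walker-walker collision. At each jump the probability of landing at another walker's current vertex is at most $1/\deg(\cdot) = O(n^{-a})$; there are $O(\log\log n)$ jumps per walker and $\binom{N}{2} = O((\log n)^2)$ pairs, so the total collision probability is $O((\log n)^2 \log\log n \cdot n^{-a}) = o(1)$, and with $\gprob$-probability $1-o(1)$ the walkers may be treated as fully independent on $[r, r+\tmix]$. For each walker, the kill and jump clocks are independent Poisson processes of rates $1$ and $\lambda\deg(\cdot) = b(1+o(1))$, so $X^i$ survives to time $\tmix$ with probability $e^{-\tmix} = (\log n)^{-2/(b+1)}$, and given survival the number of jumps is Poisson with mean $\tfrac{2b}{b+1}\log\log n$, which exceeds $\kmix$ with probability $1-o(1)$. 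Conditional on taking at least $\kmix$ jumps, the walker's endpoint lies within total variation $O(n^{-2})$ of the stationary distribution $\pi$, and hence within $o(\abs{A}/n)$ of the uniform distribution for every $A \subset V_1$, by Lemma~\ref{mixing-lem}. Each surviving walker therefore lands in $A$ with probability at least $\eps - o(1)$.

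A Chernoff bound shows that at least $\tfrac{1}{2}Ne^{-\tmix} = \Omega((\log n)^{(b-1)/(b+1)})$ walkers survive with probability $1-o(1)$, and conditional on the graph satisfying the conclusions of Lemma~\ref{mixing-lem} and no walker-walker collision occurring, each surviving walker lands in $A$ independently with probability $\geq \eps - o(1)$. Combining these gives
\begin{equation*}
\probGone{\zeta_{r+\tmix}^k \cap A = \emptyset \ \middle|\ \abs{\zeta_r^k} \geq \eta_2 \log n}{} \leq \exp\bigl(-\Omega((\log n)^{(b-1)/(b+1)})\bigr) + o(1),
\end{equation*}
uniformly over $A \subset V_1$ with $\abs{A} \geq \eps n$, so the supremum over such $A$ preserves the bound and yields the lemma. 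The main technical difficulty is the coupling: identifying surviving walker endpoints with elements of $\zeta^k$ is handled cleanly by the graphical construction, while establishing near-independence of the walkers relies on the high degree $np = n^a$ of the graph to keep walker-walker collisions rare over the polylogarithmic time horizon $\tmix$.
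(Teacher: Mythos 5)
Your proposal follows essentially the same approach as the paper: couple the particles of $\zeta^k$ at time $r$ with random walks on $G_1$ via the graphical representation, show collisions are unlikely over the short horizon $\tmix$, use survival at rate $1$ and the jump rate $\approx b$ to deduce that $\Omega((\log n)^{(b-1)/(b+1)})$ walkers survive and take at least $\kmix$ steps, then invoke Lemma~\ref{mixing-lem} to conclude each lands in $A$ with probability at least $\eps - o(1)$. The only cosmetic difference is that the paper explicitly uncouples the walkers at the first collision time $T_{\text{collide}}$ so their endpoints are genuinely independent by construction, whereas you argue directly that the collision event has negligible probability; the quantitative estimates (collision probability $O(n^{-a}(\log n)^2\log\log n)$, survival count, jump count) match.
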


For the proof of Lemma~\ref{dual-lem} we will first need to construct the coupling between $\zeta_t^k$ and a random walk process, then prove some facts about this process.  Let $\X_t = \{X_t^1, X_t^2, \ldots, X_t^{\eta_2 \log n} \}$ be the locations of $\eta_2 \log n$ independent, continuous time, simple random walks on $G_1$ that independently die at rate $1$.  That is, for each $i$, $X_t^i$ is the continuous time random walk on $G_1$ that holds at a vertex $u\in V_1$ for time Exp($\lambda \deg(u)$), then jumps to $v\sim u$ with probability $1/\deg(u)$, and which has a total life span distributed as Exp(1).  When a walker dies, we remove it from the set $\X_t$.

We can couple $\X_t$ with $\zeta_{r+t}^k$ until $t = T_{\text{die}} \wedge T_{\text{collide}}$, where $T_{\text{die}} = \inf \{t : \X_t = \emptyset \}$ (all of the random walkers die out), and $T_{\text{collide}} = \inf \{t : X_t^i = X_t^j \text{ for some } i\neq j\}$ (two random walks collide).  First, let $X_0^i$ be the $i^{\text{th}}$ element of the lexicographical ordering of the vertices in $\zeta_r^k$.  We then update $X_t^i$ according to the infection arrows by always following the first arrow to arrive in the correct orientation (going backwards in time, so this is actually the most recent arrow infecting the current vertex at which the walker resides).  That is, if $X_t^i$ is at the vertex $u$ at time $t$, then $X_t^i$ will jump to $v$ at (random walk) time $T_k^{(u_b,v_b)} - r -T_\ell^{(v,u)}$ if the arrow from $v$ to $u$ is the next encountered:
\begin{equation*}
T_\ell^{(v,u)} = \max_{w\sim u} \max_j \{ T_j^{(w,u)} \ | \  T_j^{(w,u)}< T_k^{(u_b,v_b)} - r - t \}.
\end{equation*}
If, while at the vertex $u$, $X_t^i$ encounters a recovery dot before an arrow into $u$ (there is a $j$ so that $\tbridge_k -r-t >T_j^u>T_\ell^{(v,u)}$) then the random walk dies.  It is clear that this constructs the random walk $X_t^i$ as described above, since the waiting time until the first infection arrow into the vertex $u$ is distributed as the minimum of $\deg(u)$ random variables with Exp($\lambda$) distribution, which is Exp($\lambda \deg(u)$), and the recovery dots appear at rate $1$.

Under this coupling $\X_t \subset \zeta_{r+t}^k$.  When a collision occurs between two random walkers, however, our coupling would cause those walks to stick together for all time.  To avoid this, at the time of the first collision, $T_{\text{collide}}$, we stop the coupling between $\X_t$ and $\zeta_{r+t}^k$, and instead let each of the random walks proceed independently (of the other random walks and of $\zeta_{r+t}^k$).  This way, at time $\tmix$ the locations of the surviving random walks are independent, and as long as $\tmix < T_{\text{collide}}$ they are still coupled with $\zeta_{r+t}^k$.

The proof of Lemma~\ref{dual-lem} has three basic ingredients.  First, many of the random walkers will survive for time $\tmix$.  Second, most of the random walkers will make at least $\kmix$ jumps by time $\tmix$, so by Lemma~\ref{mixing-lem} their locations will be almost uniformly distributed, so intersection with $A$ is imminent.  Finally, a collision is unlikely to occur before time $\tmix$, so the random walk process will still be coupled with $\zeta_{r+t}^k$.

\begin{proof}
We assume that $\abs{\zeta_r^k}\geq \eta_2 \log n$, $A\subset V_1$ with $\abs{A}\geq \eps n$ is fixed, and we have the coupling described above between $\X_t$ and $\zeta_{r+t}^k$.

First we observe that the random walkers are mutually independent, and so are their death clocks.  Since each random walker dies at rate $1$, the number of random walks that survive to time $\tmix = \frac{2}{b+1} \log \log n$ is Binomial($\eta_2 \log n, (\log n)^{-2/(b+1)}$).  So by Chebychev's inequality, the number of random walks that survive until time $\tmix$ is at least $\frac{1}{2}\eta_2 (\log n)^{(b-1)/(b+1)}$ with probability $1 - O((\log n)^{-(b-1)/(b+1)})$.

Next, we want all of the random walkers to make at least $\kmix$ steps so we can apply Lemma~\ref{mixing-lem}.  The probability that the $i^{\text{th}}$ walker, $X_t^i$, jumps fewer than $\kmix$ times by time $\tmix$ (ignoring whether the walker survives to time $\tmix$, as these events are independent) is at most
\begin{align*}
\sum_{k< \kmix} \frac{1}{k!} e^{-b(1-n^{-a/3})\tmix} [b(1-n^{-a/3})\tmix]^k &\leq e^{1-b(1-n^{-a/3})\tmix} [b(1-n^{-a/3})\tmix]^\kmix \\
& = O\left((\log n)^{-2b/(b+1)}(\log \log n)^\kmix \right)\\
\end{align*}
since the minimum degree of $G_1$ is at least $n^a - n^{2a/3}$ with high probability.  Therefore, all of the surviving random walks will make at least $\kmix$ jumps by time $\tmix$ with probability $1 - O((\log n)^{-(b-1)/(b+1)} (\log\log n)^{\kmix})$. 

Conditional on the events that $X_t^i$ survives to time $\tmix$ and makes at least $\kmix$ jumps, then the probability that $X_{\tmix}^i \in A$ is at least
\begin{align*}
\frac{\abs{A}}{n} (1 - 4n^{-a/3}) \geq \eps(1 - 4n^{-a/3})
\end{align*}
for asymptotically almost every $G_1$ by Lemma~\ref{mixing-lem}.  Let $E$ be the event that there are at least $\frac{1}{2}\eta_2 (\log n)^{(b-1)/(b+1)}$ random walkers that survive to time $\tmix$ and make at least $\kmix$ jumps. Then the probability that none of the random walkers hits the set $A$ is
\begin{align}
\nonumber \prob{ \vect{X}_{\tmix} \cap A = \emptyset}{} &\leq \prob{ \vect{X}_{\tmix} \cap A = \emptyset | E}{} + \prob{E^c}{} \\
\nonumber &\leq (1-\eps (1 - 4n^{-a/3}))^{\frac{1}{2}\eta_2 (\log n)^{(b-1)/(b+1)}} + \prob{E^c}{} \\
\label{rw-hitting-prob}
& = O((\log n)^{-(b-1)/(b+1)} (\log\log n)^{\kmix}).
\end{align}

Now we only need to check that the coupling between $\vect{X}_t$ and $\zeta_{r+t}^k$ has not been violated before time $\tmix$.  That is, we need to check that $T_{\text{collision}} > \tmix$. Observe that $T_{\text{collision}} \succ T'$ where $T' \sim$~Exp($\lambda(\eta_2 \log n)^2$), since at any time there are fewer than $\abs{\vect{X}_t}^2 \leq (\eta_2 \log n)^2$ directed edges connecting vertices occupied by the random walkers, and each directed edge has a rate $\lambda$ Poisson clock.  Therefore, the probability that a collision occurs by time $\tmix$ is
\begin{align}
\nonumber \prob{T_{\text{collision}} \leq \tmix}{} &\leq \prob{T' \leq \tmix}{}\\
\nonumber & = 1 - \exp\left[-\tmix \lambda (\eta_2 \log n)^2 \right] \\
\nonumber & = 1 - \exp \left[- \frac{2b\eta_2^2}{b+1} n^{-a} (\log n)^2 \log\log n \right] \\
\label{collisions}
&= O( n^{-a} (\log n)^2 \log\log n).
\end{align}

Piecing together equations~(\ref{rw-hitting-prob}) and~(\ref{collisions}) shows that the coupling between $\zeta_{r+t}^k$ and $\vect{X}_t$ will not be violated before time $\tmix$, so $\zeta_{r+\tmix}^k \cap A \neq \emptyset$ with probability
$$1-O((\log n)^{-(b-1)/(b+1)} (\log\log n)^{\kmix})$$
for asymptotically almost every $G_1$ whenever $\abs{\zeta_{r}^k}\geq \eta_2 \log n$.
\end{proof}

\section{Proof of Theorem~\ref{main-thm}}
\label{main-proof}
\begin{proof}
First, we observe that by Lemma~\ref{early-cp-lem}, $\prob{\abs{\xi_r} > 0}{v_0} = 1-\frac{1}{b} - o(1)$.  Then by Proposition~\ref{survival-thm}, if $\abs{\xi_r} > 0$ then $\abs{\xi_t} > \eps n$ for all times $t\in [\eta_3 \log n, e^{cn}]$ with high probability.  Note that the number of vertices that are used up by the dual processes, $\{\zeta_t^k\}_k$, at any time is at most $O((\log n)^4)$ by Lemma~\ref{early-cp-lem} (when we observe the dual process, it is only allowed to grow for time $r = O(\log \log n)$).  This is not a problem, since we have accounted for the fact that $\xi_t$ must avoid these vertices in equation~(\ref{eps-def}).

Now we consider the sequence of times at which the bridge edge may transmit the infection from $u_b$ to $v_b$, $\{T_{k}^{(u_b, v_b)}\}_k$.  The interarrival times, $T_{1}^{(u_b, v_b)}$ and $T_{k}^{(u_b, v_b)} - T_{k-1}^{(u_b, v_b)}$ for $k\geq 2$, are independent with distribution Exp($\lambda$).  If any of the first $\log n$ interarrival times are smaller than $2r+\tmix$, then we will not have enough time to allow the dual process started at $T_{k}^{(u_b, v_b)}$ to be well mixed and guarantee that $\xi_{T_{k}^{(u_b, v_b)}}(u_b)$ is essentially independent for each $k$.  The reason for the $2r$ term is that once the infection reaches the second component, we need additional time $r$ to see whether it survives in that component.  Fortunately the probability that any of the first $\log n$ interarrival times is smaller than $2r+\tmix = O(\log \log n)$ is at most $O(n^{-a} \log n \log \log n)$.

By Lemmas~\ref{early-cp-lem} and~\ref{dual-lem}, the dual process $\zeta_t^k$, started at space-time $(u_b, T_{k}^{(u_b, v_b)})$, will intersect with $\xi_{T_{k}^{(u_b, v_b)}-r-\tmix}$ with probability $1 - \frac{1}{b} - o(1)$.  If successful, the result is that $u_b$ is infected at time $T_{k}^{(u_b, v_b)}$, and therefore $v_b \in V_2$ becomes infected at this time.  By applying Lemma~\ref{early-cp-lem} and Theorem~\ref{survival-thm} to the infection in $G_2$ started at $v_b$, this will lead to a wide-spread infection of $G_2$ (with greater than $\eps n$ infected vertices in $G_2$) by time $T_{k}^{(u_b, v_b)} + \eta_3 \log n$ with probability $1-\frac{1}{b} - o(1)$.  The first $\log n$ interarrival times of $\{T_{k}^{(u_b, v_b)}\}_k$ all exceed $2r+\tmix = O(\log\log n)$ with high probability, and conditional on this, the probability that the infection has not yet spread to the second component following the $k^{\text{th}}$ time $T_{k}^{(u_b, v_b)}$ ($k\leq \log n$) is $\left[1-\left(1-\frac{1}{b} - o(1)\right)^2\right]^{k}$.  The probability that $N\sim$~Geometric$\left(\left(1-\frac{1}{b} - o(1)\right)^{2}\right)$ exceeds $\log n$ is $O(n^{- (1-1/b-o(1))^2})$, so we need not worry about the interarrival times exceeding $2r+\tmix$.

Therefore the distribution of the time
\begin{align*}
\tau := \inf \{t >0 : \ \abs{\xi_t^{v_0} \cap V_2} > \epsilon n \},
\end{align*}
conditional on $\abs{\xi_r}>0$, is a sum of a Geometric$\left(\left(1-\frac{1}{b} - o(1)\right)^{2}\right)$ number of independent Exp($\lambda$) random variables, which is Exp$\left(\left(1-\frac{1}{b} - o(1)\right)^{2} \lambda \right)$, plus some time $O(\log n)$ to account for waiting for the process to grow to size $\eps n$ in each subgraph.  This completes the proof for a single bridge edge, since $\tau$ is infinite when $\abs{\xi_r} = 0$, as $\prob{T_1^{(u_b, v_b)} < \eta_3 \log n}{} = O(n^{-a}\log n)$.

The proof for the multiple-bridge case, $\abs{B} = o(n^a/\log n \log\log n)$, is nearly identical if we replace $T_k^{(u_b,v_b)}$ with $T_k^B := \min \{T_k^{(u_b,v_b)} : (u_b,v_b)\in B, u_b\in V_1, v_b\in V_2\}$, the sequence of times at which {\em some} bridge edge attempts to transmit the infection.  All that needs to be checked is that the first $\log n$ interarrival times of consecutive transmission attempts are larger than $2r+\tmix = O(\log\log n)$.  This is straightforward to verify, and the details are left to the reader.
\end{proof}

	\bibliographystyle{siam}
	\bibliography{proposal_cite}

\appendix
\section{Branching Processes}

In this section we prove Lemmas~\ref{bp-size-lem} and~\ref{bp-max-size} regarding the size of a supercritical, continuous-time branching process, $Z_t$, in which each individual gives birth to a single offspring at rate $\beta>1$ and dies at rate 1.  Let $\pr{\cdot}{}$ be the probability measure associated with this process, and for all $i,j \in \Z_{\geq 0}$ and $t\geq 0$, let $P_{ij}(t) := \pr{Z_t = j \ \middle | \ Z_{0} = i}{} =: \pr{Z_t=j}{i}$.  %Since $P_{ij}(\tau, t) \deq P_{ij}(0, t-\tau)$, let $P_{ij}(t) := P_{ij}(0, t)$.  Also, let $F(s, \tau, t) = \sum_k P_{1k}(\tau, t) s^k$ and $F(s, t) = \sum_k P_{1k}(t) s^k$ be the respective probability generating functions.

%%%%% BP Survival Lemma
\begin{lem}
\label{bp-survival-lem}
Let $Z_t$ be a branching process where each individual gives birth to a single offspring at rate $\beta>1$ and dies at rate $1$.  If $Z_0 = \ceil{3 \log_{\beta} n }=: \alpha$ then
\begin{align*}
\pr{Z_t = 0 \text{ eventually}}{\alpha} \leq n^{-3}.
\end{align*}
\end{lem}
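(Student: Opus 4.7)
The plan is to use the standard computation of the extinction probability for this branching process, together with the branching property that distinct initial lineages evolve independently. This reduces everything to a short arithmetic check.

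First I would compute the extinction probability starting from a single individual. The process $Z_t$ can be analyzed via its embedded jump chain: at each jump, the current individual either splits into two (with probability $\beta/(\beta+1)$) or dies (with probability $1/(\beta+1)$). If $q$ denotes the extinction probability starting from $Z_0 = 1$, then $q$ is the minimal nonnegative root of
\begin{equation*}
q = \frac{1}{\beta + 1} + \frac{\beta}{\beta+1} q^2,
\end{equation*}
which factors as $(q-1)(\beta q - 1) = 0$. Since $\beta > 1$, the minimal nonnegative root is $q = 1/\beta$, and classical branching process theory (e.g.\ Athreya--Ney) identifies this minimal root with the true extinction probability.

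Next I would invoke the branching property. Starting from $Z_0 = \alpha$ individuals, the $\alpha$ descendant lineages evolve as independent copies of the process started from a single individual. The event that $Z_t = 0$ eventually is the intersection of the corresponding extinction events for each lineage, so
\begin{equation*}
\pr{Z_t = 0 \text{ eventually}}{\alpha} = q^{\alpha} = \beta^{-\alpha}.
\end{equation*}

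Finally I would plug in $\alpha = \lceil 3 \log_\beta n \rceil \geq 3 \log_\beta n$, obtaining
\begin{equation*}
\beta^{-\alpha} \leq \beta^{-3 \log_\beta n} = n^{-3},
\end{equation*}
which is the desired bound. There is no real obstacle here; the only care needed is to correctly identify the per-individual extinction probability as $1/\beta$ (rather than something like the survival probability $1 - 1/\beta$ mentioned in Lemma~\ref{early-cp-lem}), and to use independence of lineages rather than attempting a more delicate moment or coupling argument.
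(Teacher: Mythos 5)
Your proof is correct and follows essentially the same route as the paper: per-lineage extinction probability $1/\beta$, independence of the $\alpha$ initial lineages, and the arithmetic $\beta^{-\alpha}\le n^{-3}$. The only difference is that you derive $q=1/\beta$ from the generating-function fixed point rather than citing it from Harris, which is just filling in a standard reference.
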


\begin{proof}
If $Z_0 = 1$, then $\pr{Z_t = 0 \text{ eventually}}{1} = 1/\beta$ \cite{harris-bp}.  If $Z_0 = \alpha$ then all of the families are mutually independent, so

\begin{equation*}
\pr{Z_t = 0 \text{ eventually}}{\alpha} = \beta^{-\alpha} \leq n^{-3}.
\end{equation*}
\end{proof}

%%%%% BP Size bounds lemma
\begin{lem}
\label{bp-size-lem}
Let $Z_t$ be a branching process where each individual gives birth to a single offspring at rate $\beta>1$ and dies at rate $1$.  When $Z_0 = 1$, if $r = \frac{2}{\beta-1} \log \log n$ and $s = \frac{3}{\beta - 1} \log n$, then for any constants $C,c>0$
\begin{align}
\pr{0 < Z_{r} \leq C \log n}{1} &= O((\log n)^{-1}) 
\label{bp-size-lb} \\
\pr{Z_r > \frac{2\beta}{\beta-1} (\log n)^3}{1} &= O(n^{-2}).
\label{bp-size-ub} \\
\pr{0 < Z_s \leq cn}{1} &= O(n^{-2})
\label{bp-size-biglb}
\end{align}
\end{lem}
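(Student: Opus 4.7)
The plan is to exploit the fact that $Z_t$ is a linear birth--death process whose law can be written down in closed form. Solving the first-order PDE $\partial_t F = (1-s)(1-\beta s)\partial_s F$ for the probability generating function $F(s,t) = \E[s^{Z_t}\mid Z_0 = 1]$ along characteristics yields
$$\pr{Z_t = 0}{1} = \frac{1 - e^{-(\beta-1)t}}{\beta - e^{-(\beta-1)t}},$$
and, conditional on $\{Z_t > 0\}$, $Z_t$ is geometrically distributed on $\{1,2,\ldots\}$ with parameter $1 - \eta(t)$, where
$$\eta(t) = \frac{\beta(1 - e^{-(\beta-1)t})}{\beta - e^{-(\beta-1)t}} \qquad\text{and}\qquad 1-\eta(t) = \frac{(\beta-1)e^{-(\beta-1)t}}{\beta - e^{-(\beta-1)t}}.$$
In particular, for every integer $K \geq 1$,
$$\pr{Z_t > K}{1} = (1 - \pr{Z_t=0}{1})\,\eta(t)^K, \qquad \pr{0 < Z_t \leq K}{1} = (1 - \pr{Z_t=0}{1})(1 - \eta(t)^K).$$
All three estimates then reduce to evaluating $1 - \eta(t)$ at the relevant $t$ and applying the elementary bounds $(1-x)^m \leq e^{-mx}$ and $(1-x)^m \geq 1 - mx$ on $x \in [0,1]$.

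For (\ref{bp-size-lb}), at $t = r = \frac{2}{\beta-1}\log\log n$ we have $e^{-(\beta-1)r} = (\log n)^{-2}$, so $1 - \eta(r) = \frac{\beta-1}{\beta}(\log n)^{-2}(1+o(1))$. Hence $1 - \eta(r)^{C\log n} \leq C(\log n)(1-\eta(r)) = O((\log n)^{-1})$, and bounding $1 - \pr{Z_r = 0}{1} \leq 1$ completes this part.

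For (\ref{bp-size-ub}), again at $t = r$, I apply $\eta(r)^K \leq e^{-K(1-\eta(r))}$ with $K = \frac{2\beta}{\beta-1}(\log n)^3$. The exponent is $-K\cdot \frac{(\beta-1)}{\beta}(\log n)^{-2}(1+o(1)) = -2\log n \,(1+o(1))$, giving $\eta(r)^K = n^{-2(1+o(1))} = O(n^{-2})$. For (\ref{bp-size-biglb}), at $t = s = \frac{3}{\beta-1}\log n$ we have $e^{-(\beta-1)s} = n^{-3}$, so $1 - \eta(s) = O(n^{-3})$, and therefore $1 - \eta(s)^{cn} \leq cn(1-\eta(s)) = O(n^{-2})$.

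There is no real obstacle here beyond careful bookkeeping, since the whole proof is a closed-form computation from the geometric-on-survival structure. The one point requiring attention is matching constants: that the leading factor $(\beta-1)/\beta$ in $1 - \eta(t)$ combines with the factor $2\beta/(\beta-1)$ in $K$ to produce exactly the exponent $-2$ needed for (\ref{bp-size-ub}), and that the choice $s = \frac{3}{\beta-1}\log n$ makes $(cn)(1-\eta(s))$ precisely $O(n^{-2})$ in (\ref{bp-size-biglb}).
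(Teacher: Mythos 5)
Your proposal is correct and follows essentially the same route as the paper: both use the exact closed-form transition probabilities of the linear birth--death process (your $\eta(t)$ is the paper's $\eta(t)$ with numerator and denominator multiplied by $\beta$) and the geometric-on-survival structure, reducing everything to elementary bounds on $\eta(t)^K$. The one place to tighten is the step ``$\eta(r)^K = n^{-2(1+o(1))} = O(n^{-2})$'' in your argument for~(\ref{bp-size-ub}): that implication is false for a general $o(1)$ term, and you should observe explicitly that the correction is nonnegative here, since $1-\eta(r) = \frac{(\beta-1)e^{-(\beta-1)r}}{\beta - e^{-(\beta-1)r}} \geq \frac{\beta-1}{\beta}e^{-(\beta-1)r}$ (because $\beta - e^{-(\beta-1)r} \leq \beta$), whence $K(1-\eta(r)) \geq 2\log n$ and $\eta(r)^K \leq n^{-2}$ outright.
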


The proof uses the following bounds.

%%%%% Log bounds lemma
\begin{lem}
\label{log-lem}
If $0<y<\frac{1}{2}$ then
\begin{equation}
\label{log-bound}
-y - y^2 \leq \ \log \left( 1 - y \right)\ \leq -y - \frac{y^2}{2}.
\end{equation}
\end{lem}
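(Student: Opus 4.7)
My plan is to work directly from the Taylor series expansion
\begin{equation*}
\log(1-y) = -\sum_{k=1}^{\infty} \frac{y^k}{k} = -y - \frac{y^2}{2} - \frac{y^3}{3} - \frac{y^4}{4} - \cdots,
\end{equation*}
which is valid for $|y|<1$ and in particular for $0<y<1/2$. Both inequalities then reduce to controlling the tail starting at the cubic term, so no further analytic machinery is needed.

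For the upper bound $\log(1-y) \leq -y - y^2/2$, every term in the tail $-\sum_{k\geq 3} y^k/k$ is strictly negative, so dropping them only increases the sum. This gives the right-hand inequality immediately.

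For the lower bound $-y - y^2 \leq \log(1-y)$, equivalently I need
\begin{equation*}
\frac{y^3}{3} + \frac{y^4}{4} + \frac{y^5}{5} + \cdots \leq \frac{y^2}{2}.
\end{equation*}
I would estimate the left side by a geometric series: since $1/k \leq 1/3$ for $k\geq 3$,
\begin{equation*}
\sum_{k=3}^{\infty} \frac{y^k}{k} \leq \frac{1}{3}\sum_{k=3}^{\infty} y^k = \frac{y^3}{3(1-y)} \leq \frac{2y^3}{3},
\end{equation*}
where the last step uses $y<1/2$ so that $1-y > 1/2$. Finally, $2y^3/3 \leq y^2/2$ is equivalent to $y \leq 3/4$, which holds since $y<1/2$. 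This yields the left-hand inequality.

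There is no serious obstacle here; the only thing to be careful about is that the geometric tail bound requires $y$ bounded away from $1$, which is exactly where the hypothesis $y<1/2$ is used (both in summing the geometric series and in the final comparison $2y^3/3 \leq y^2/2$).
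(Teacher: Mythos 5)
Your proof is correct and follows essentially the same route as the paper: expand $\log(1-y)$ as its Taylor series, get the upper bound by dropping the (negative) cubic and higher terms, and get the lower bound by dominating the tail with a geometric series and using $y<1/2$. The only cosmetic difference is where the tail is cut: the paper groups from the $y^2$ term onward (bounding $1/k\le 1/2$ for $k\ge 2$) while you cut from $y^3$ onward (bounding $1/k\le 1/3$ for $k\ge 3$); both land in the same place with a tiny bit of arithmetic.
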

\begin{proof}
By using the series expansion:
\begin{equation*}
\log (1 - y) = - \sum_{k = 1}^{\infty} \frac{y^k}{k},
\end{equation*}
the upper bound in (\ref{log-bound}) is immediate by truncating after the second term in the series.  Using that $y \leq 1/2$, the lower bound in (\ref{log-bound}) follows from:
\begin{align*}
\log(1-y) &\geq - y - \frac{y^2}{2} \left( 1 + y + y^2 + \cdots \right) \\
&\geq -y - \frac{y^2}{2} (2).
\end{align*}
\end{proof}

%%% Proof of BP size lemma.
We now prove Lemma~\ref{bp-size-lem}.
\begin{proof}
The transition probabilities for $Z_t$ can be computed exactly, as in Chapter V of~\cite{harris-bp}:
\begin{align*}
P_{10}(t) &= \frac{1 - e^{-(\beta - 1)t}}{\beta - e^{-(\beta - 1)t}} \\
P_{1k}(t) &= [1 - P_{10}(t)]\ [1 - \eta(t)]\ \eta(t)^{k-1} \\
\eta(t) &= \frac{1 - e^{-(\beta - 1)t}}{1 - \frac{1}{\beta}e^{-(\beta - 1)t}}.
\end{align*}
So we have at time $r = \frac{2}{\beta-1} \log \log n$ that
\begin{align}
\nonumber \pr{0 < Z_r \leq C \log n}{1} &= \sum_{k=1}^{C \log n} P_{1k}(r) \\
&= [1 - P_{10}(r)]\cdot \left[1 - \eta(r)^{C \log n} \right].
\label{bp-size-lb1}
\end{align}
We apply Lemma~\ref{log-lem} to obtain:
\begin{align*}
\log\left[ 1 - e^{-(\beta - 1)r} \right] = \log\left[1 - (\log n)^{-2} \right] & \geq - (\log n)^{-2} - (\log n)^{-4} \\
\log\left[ 1 - \frac{1}{\beta}e^{-(\beta - 1)r} \right] = \log\left[1 - \frac{1}{\beta} (\log n)^{-2} \right] &\leq -\frac{1}{\beta} (\log n)^{-2} - \frac{1}{2\beta^2} (\log n)^{-4}
\end{align*}
whenever $n \geq 5$.  Subtracting the second line from the first, then multiplying both sides by $C \log n$ gives
\begin{equation*}
\log \eta(r)^{C \log n} \geq -C\left(1 - \frac{1}{\beta}\right) (\log n)^{-1} - O\left( (\log n)^{-3} \right).
\end{equation*}
Combining equation~(\ref{bp-size-lb1}) with this bound and the fact that $1 - e^{-x} \leq x$ for $x \geq 0$ (this follows from the upper bound of Lemma~\ref{log-lem}) proves equation~(\ref{bp-size-lb}):
\begin{align*}
\pr{0 < Z_r \leq C \log n}{1} &\leq 1 - \exp\left[ \log \eta(r)^{C \log n}\right] \\
& \leq C\left(1 - \frac{1}{\beta}\right) (\log n)^{-1} + O\left( (\log n)^{-3} \right).
\end{align*}
Equation~(\ref{bp-size-biglb}) is proved in the same way.  To prove equation~(\ref{bp-size-ub}) we begin with
\begin{align*}
\pr{Z_r > \frac{2\beta}{\beta-1} (\log n)^3}{1} &= \sum_{k>\frac{2\beta}{\beta-1} (\log n)^3} P_{1k}(r) \\
&= [1 - P_{10}(r)]\cdot \eta(r)^{\frac{2\beta}{\beta-1} (\log n)^3}.
\end{align*}
Now applying Lemma~\ref{log-lem} in the same way as above (but with the upper and lower bounds reversed) yields:
\begin{align*}
\pr{Z_r > \frac{2\beta}{\beta-1} (\log n)^3}{1} &\leq \exp \left[ \log \eta(r)^{\frac{2\beta}{\beta-1} (\log n)^3} \right] \\
&\leq \exp \left[ - \frac{2\beta}{\beta-1} \left(1 - \frac{1}{\beta}\right) \log n + O\left( (\log n)^{-1} \right) \right] \\
& = O(n^{-2}).
\end{align*}
\end{proof}

% BP maximum size lemma.
\begin{lem}
\label{bp-max-size}
Using the same setup as in Lemma~\ref{bp-size-lem}, let $\eta_1 = \frac{6\beta}{(\beta-1)\log \beta}$, then
\begin{equation*}
\pr{\sup_{0\leq t\leq r} Z_t \geq \eta_1 (\log n)^4}{1} = O\left(n^{-2} \right).
\end{equation*}
\end{lem}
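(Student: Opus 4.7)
The plan is to apply Doob's maximal inequality for non-negative c\`adl\`ag submartingales to the process $X_t := e^{\theta Z_t}$ for a carefully chosen $\theta = \theta_n > 0$ of order $(\log n)^{-2}$. First I would verify that $e^{\theta Z_t}$ is a submartingale for every $\theta > 0$: a direct generator computation gives
\[
L\,e^{\theta n} \;=\; n\,e^{\theta n}\bigl[\beta e^\theta + e^{-\theta} - (\beta+1)\bigr],
\]
and the bracketed function of $\theta$ is non-negative for $\theta \geq 0$ because it vanishes at $\theta = 0$ with derivative $\beta - 1 > 0$ and is strictly convex. Doob's maximal inequality then yields, for any $K, \theta > 0$,
\[
\pr{\sup_{0\leq t\leq r} Z_t \geq K}{1} \;\leq\; e^{-\theta K}\,\E\bigl[e^{\theta Z_r}\bigr].
\]

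Next, I would compute $\E[e^{\theta Z_r}] = f_r(e^\theta)$ exactly, using the geometric form of $P_{1k}(r)$ recorded at the start of the proof of Lemma~\ref{bp-size-lem}. Summing the geometric series gives
\[
f_r(s) \;=\; P_{10}(r) + \bigl[1 - P_{10}(r)\bigr]\bigl[1 - \eta(r)\bigr]\,\frac{s}{1 - s\,\eta(r)}
\]
for $s < 1/\eta(r)$, and since $e^{(\beta-1)r} = (\log n)^2$ a short Taylor expansion yields $1/\eta(r) = 1 + (1-1/\beta)(\log n)^{-2}(1 + o(1))$. Choosing $\theta$ just below this threshold, e.g.\ $\theta = (1-1/\beta)/(2(\log n)^2)$, makes $e^\theta \eta(r) = 1/2 + o(1)$ bounded away from $1$, and a routine expansion then gives $f_r(e^\theta) \to 2 - 1/\beta$.

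Finally, I would substitute $K = \eta_1(\log n)^4$ into the Doob bound. Using the identity $\eta_1(1 - 1/\beta) = 6/\log \beta$, which follows from $\eta_1 = 6\beta/((\beta-1)\log\beta)$, the exponent becomes
\[
\theta K \;=\; \tfrac{1}{2}\,\eta_1\,(1 - 1/\beta)(\log n)^2 \;=\; \frac{3(\log n)^2}{\log \beta},
\]
so that
\[
\pr{\sup_{0\leq t\leq r} Z_t \geq \eta_1(\log n)^4}{1} \;\leq\; (2 - 1/\beta + o(1))\exp\!\left[-\frac{3(\log n)^2}{\log \beta}\right],
\]
which is $O(n^{-2})$ (indeed super-polynomially small).

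The main technical obstacle is the generating-function estimate on $\E[e^{\theta Z_r}]$: the chosen $\theta$ must lie strictly below the critical value $\log(1/\eta(r))$ uniformly in $n$, so one must track the lower-order terms in the expansions of $\eta(r)$ and $1 - e^\theta\eta(r)$ carefully enough to exhibit a positive gap $1 - e^\theta\eta(r) = \Theta((\log n)^{-2})$ that makes $f_r(e^\theta)$ bounded. Once that estimate is secured, the Doob step and the final arithmetic are essentially immediate.
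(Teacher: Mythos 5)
Your proposal is correct, and it takes a genuinely different route from the paper. The paper's argument is a three-step combinatorial one: first it invokes equation~(\ref{bp-size-ub}) of Lemma~\ref{bp-size-lem} to say that $Z_r \leq \frac{2\beta}{\beta-1}(\log n)^3$ with probability $1-O(n^{-2})$; then it applies the strong Markov property at the first time $T$ the process hits $\eta_1(\log n)^4$, noting that on $\{T\leq r,\ Z_r \leq \frac{2\beta}{\beta-1}(\log n)^3\}$ the process must drop from $\eta_1(\log n)^4$ to $\frac{2\beta}{\beta-1}(\log n)^3$ within time $r$; finally it partitions the $\eta_1(\log n)^4$ individuals into $\frac{2\beta}{\beta-1}(\log n)^3$ independent families of size $3\log_\beta n$ and observes that such a drop forces at least one family to go extinct, which by Lemma~\ref{bp-survival-lem} has probability at most $n^{-3}$, so a union bound finishes. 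Your route bypasses both auxiliary lemmas entirely: you build the exponential submartingale $e^{\theta Z_t}$ from a one-line generator computation, apply Doob's maximal inequality, and evaluate the moment generating function exactly from the geometric law of $Z_r$, tuning $\theta=\Theta((\log n)^{-2})$ to sit strictly inside the radius of convergence. This is more self-contained and in fact delivers a superpolynomial bound $\exp[-3(\log n)^2/\log\beta]$ rather than the $O(n^{-2})$ the paper settles for; the cost is that the generating-function expansion requires a little care. One small slip: you write ``$e^\theta\eta(r)=1/2+o(1)$'', but in fact $e^\theta\eta(r)=1-\frac{1-1/\beta}{2}(\log n)^{-2}(1+o(1))\to 1$; what is $1/2+o(1)$ is the \emph{ratio} $(1-e^\theta\eta(r))/(1-\eta(r))$, which is exactly what makes $f_r(e^\theta)\to 2-1/\beta$. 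Your closing remark about ``a positive gap $1-e^\theta\eta(r)=\Theta((\log n)^{-2})$'' shows you had the right picture, so this is a slip of the pen rather than a gap in the argument.
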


\begin{proof}
For the duration of the proof of Lemma~\ref{bp-max-size} let $T := \inf \{t \ | \ Z_t \geq (\log n)^4 \}$.  Then the claim in Lemma~\ref{bp-max-size} is equivalent to $\prob{T \leq r}{1} = O\left(n^{-2} \right)$.  Our strategy is to use equation~(\ref{bp-size-ub}) in Lemma~\ref{bp-size-lem} to say that $Z_r$ can be at most $O((\log n)^3)$, then use the Strong Markov Property to say that if $Z_t$ exceeds $\eta_1 (\log n)^4$ at any time $t\leq r$ then it is unlikely to drop below size $O((\log n)^3)$ by time $r$.
\begin{align*}
\pr{T \leq r}{1} &\leq \pr{T\leq r,\ Z_r \leq \frac{2\beta}{\beta-1} (\log n)^3}{1} + \pr{Z_r > \frac{2\beta}{\beta-1} (\log n)^3}{1} \\
&\leq \pr{Z_r \leq \frac{2\beta}{\beta-1} (\log n)^3 \ \middle| \ T\leq r}{1} + O(n^{-2}) \\
&\leq \pr{\inf_{t\leq r} Z_t \leq \frac{2\beta}{\beta-1} (\log n)^3}{\eta_1(\log n)^4} + O(n^{-2}) \\
&\leq 1 - [1 - \pr{Z_r = 0}{3\log_{\beta} n}]^{\frac{2\beta}{\beta-1} (\log n)^3} + O(n^{-2}) \\
&\leq 1 - \left[1 - \frac{1}{n^3}\right]^{\frac{2\beta}{\beta-1} (\log n)^3} + O(n^{-2}) = O\left( n^{-2} \right).
\end{align*}
We applied the Strong Markov Property and translation invariance of the branching process at the third line above.  The fourth line follows by observing that for the branching process to transition from size $\eta_1(\log n)^4$ to size $\frac{2\beta}{\beta-1} (\log n)^3$, then at least one of the $\frac{2\beta}{\beta-1} (\log n)^3$ sets of $3 \log_{\beta} n$ individuals must go extinct.  The fifth line follows from Lemma~\ref{bp-survival-lem}.
\end{proof}

\end{document}